\newtheorem{theorem}{Theorem}[section]
\newtheorem{corollary}[theorem]{Corollary}
\newtheorem{lemma}[theorem]{Lemma}
\newtheorem{proposition}[theorem]{Proposition}
\newtheorem{remark}[theorem]{Remark}
\numberwithin{equation}{section}
\numberwithin{equation}{section}
\begin{document}

\baselineskip=17pt

\title{Variational inequalities for bilinear averages}

\date{}

\maketitle

 \begin{center}
{\bf Honghai Liu}\\
School of Mathematics and Information Science,\\
Henan Polytechnic University,\\
Jiaozuo, Henan, 454003,  China \\
E-mail: {\it hhliu@hpu.edu.cn}\vskip 0.5cm
\end{center}

\renewcommand{\thefootnote}{}

\footnote{2010 \emph{Mathematics Subject Classification}: Primary
42B25; Secondary 42B20.}

\footnote{\emph{Key words and phrases}:
Variational inequalities, Averages, Bilinear operators,
Bilinear Littlewood-Paley operators}


\renewcommand{\thefootnote}{\arabic{footnote}}
\setcounter{footnote}{0}

\newpage

\begin{abstract}
We obtain variational inequalities for some classes of bilinear averages of one variable, generalizing the variational inequalities for averages of R. Jones {\it et al}. As an application we get almost everywhere convergence for the ergodic averages along cubes on a dynamical system.
\end{abstract}

 \section{Introduction}\label{sect1}

 The variational inequalities have been the subject of many
 recent articles in probability, ergodic theory and harmonic
 analysis.For linear version, the first variational inequality was proved by L\'epingle \cite{Lep76} for martingales (see \cite{PiXu88} for a simple proof). Bourgain \cite{Bou89} used L\'epingle's result to obtain corresponding variational estimates for the Birkhoff ergodic averages and then directly deduce pointwise convergence results without previous knowledge that pointwise convergence holds for a dense subclass of functions, which is quite diffcult in some ergodic models. A few years later, Jones and his collaborators systematically studied variational inequalities for ergodic averages in \cite{JKRW98}, \cite{JRW03}, \cite{CJRW2000} and \cite{CJRW2002}, see also \cite{HoMa,MTX,HL17}. Recently, several results on variational inequalities for discrete averaging operators of Radon type have also been established (cf. e.g. \cite{Kra14}, \cite{MiTr14}, \cite{MST15}, \cite{MTZ14}, \cite{Zor14}).
 \par
In this paper we concern with variational inequalities for some classes of bilinear averages, and their application to ergodic theory. In fact, the problem of almost everywhere convergence of multilinear ergodic averages plays an important role in ergodic theory. For instance, Demeter {\it et al}\ \cite{DTH08}
 considered the following multilinear averages and related ergodic averages:
 \begin{align}\label{RGaverage}
 T_{A,\mathbb R,r}(f_1,\cdots,f_{n-1})(x)=\frac1{(2r)^m}\int_{|t_1|,\cdots,|t_{m}|\le r}\prod_{i=1}^{n-1}f_i\big(x+\sum_{j=1}^ma_{i,j}t_j\big)d\vec{t},
 \end{align}
 and
  \begin{align}\label{VGaverage}
 T_{A,X,L}(f_1,\cdots,f_{n-1})(x)=\frac1{(2L+1)^m}\sum_{|l_1|,\cdots,|l_{m}|\le L}\prod_{i=1}^{n-1}f_i\big(S^{\sum_{j=1}^ma_{i,j}l_j}x\big),
 \end{align}
  where $n>1$, $m\ge1$, $A=(a_{i,j})$ is a $(n-1)\times m$ integer-valued matrix and $(X,\Sigma,m,S)$ is a dynamical system. This kind of averages are related to the Furstenberg recurrence theorem \cite{F77} and to Szemer\'{e}di's theorem \cite{S75} on arithmetic progressions, and are also connected to the result in \cite{GT08} that primes contain arbitrarily long progressions.To get the convergence, authors  established the almost everywhere convergence for $T_{A,X,L}$ for $f_1,\cdots,f_{n-1}\in L^\infty(X)$, proved $\sup\limits_{L>0}|T_{A,X,L}|$ maps $L^{p_1}(X)\times\cdots \times L^{p_{n-1}}(X)$ to $L^p(X)$ and extended the convergence result to the case when $ f_i\in L^{p_i}(X)$. The boundedness of $\sup\limits_{L>0}|T_{A,X,L}|$  is a consequence of an analogous boundedness for $\sup\limits_{r>0}|T_{A,\mathbb R,r}|$, because of transference arguments. But the problem of almost everywhere convergence of $T_{A,X,L}$ for $f_1,\cdots,f_{n-1}\in L^\infty(X)$ is quite difficult except some special case. An alternate method would be to prove variational inequalities for $T_{A,X,L}$ in $L$ without consider the almost everywhere convergence of $T_{A,X,L}$ for $f_1,\cdots,f_{n-1}\in L^\infty(X)$. In 2008, Demeter {\it et al}\ \cite{DLTH08} established an oscillation result(a weak variational inequality) which is used to prove the convergence for the signed average analog of Bourgain's return times theorem, and to provide a separate proof of Bourgain's theorem.
\par
Precisely, we primarily consider the almost everywhere convergence of the following bilinear averages:
\begin{align*}
Q_t(f,g)(x)=\frac1{t^2}\int_{|y|\le\frac t2}\int_{|z|\le\frac t2}f(x-y)g(x-z)dydz,
\end{align*}
where $t>0$, and $f,g$ are arbitrary measurable functions on $\mathbb R$. Note that averages $Q_t(f,g)$ are special cases of multilinear averages defined in \eqref{RGaverage} when $n=3$, $m=2$ and $A=I_{2\times2}$. We denote the family $\{Q_t(f,g)\}_{t>0}$ by $\mathcal Q(f,g)$. Before we can get into more details we need some definitions.
\par
For sequence $\{a_n\}$ and $\rho\ge1$ define the variational norm $V_\rho$ by
$$
\|\{a_n\}\|_{V_\rho}=\sup_{\{n_i\}}\big(\sum_{i}|a_{n_i}-a_{n_{i+1}}|^\rho\big)^{1/\rho},
$$
where the supremum is taken over all systems of indices $n_1<n_2<\cdots$.
Given an interval $I\in (0,\infty)$ and a family of complex numbers $\mathfrak a=\{a_t\}_{t\in I}$, the variational norm of the family $\mathfrak a$ is defined as
\begin{equation*}\label{q-ver number family}
\|\mathfrak a\|_{V_\rho(I)}=\sup\big(\sum_{i\geq1}
|a_{t_i}-a_{t_{i+1}}|^\rho\big)^{\frac{1}{\rho}},
\end{equation*}
where the supremum runs over all increasing sequences $\{t_i\in I:i\geq1\}$. It is trivial that
\begin{equation}\label{number contr ineq}
\|\mathfrak a\|_{L^\infty(I)}:=\sup_{t\in I}|a_t|
\le|a_{t_0}|+\|\mathfrak a\|_{V_\rho(I)}\quad\text{for\ any}\ t_0\in I\ \text{and}\ \rho\ge1.
\end{equation}
If $I=(0,\infty)$, we denote the variational norm $V_\rho(I)$ by $V_\rho$ for short.
\par
Given a family of Lebesgue measurable functions $\mathcal F=\{F_t(x)\}_{t>0}$ defined on $\mathbb{R}$, for fixed $x$ in $\mathbb{R}$ the value of the strong $\rho$-variation operator $V_\rho(\mathcal F)$ of the family $\mathcal F$ at $x$ is defined by
\begin{equation}\label{defini of Vq(F)}
V_\rho(\mathcal F)(x)=\|\{F_t(x)\}_{t>0}\|_{V_\rho},\quad \rho\ge1.
\end{equation}
It is easy to observe from the definition of $\rho$-variation norm that for fixed $x$ if $V_\rho(\mathcal F)(x)<\infty$, then $\{F_t(x)\}_{t>0}$ converges when $t\rightarrow0$ and $t\rightarrow\infty$. In particular, if $V_\rho(\mathcal F)$ belongs to some function spaces such as $L^p$ or $L^{p,\infty}$, then the family $\{F_t(x)\}_{t>0}$ converges almost everywhere without any additional condition. This is why mapping property of strong $\rho$-variation operator is so interesting in ergodic theory and harmonic analysis.
\par
The following theorem is a variational inequality for bilinear averages over cubes.
\begin{theorem}\label{var of the Q C}
For $\rho>2$, $1<p_1,p_2<\infty$ and $\frac1p=\frac1{p_1}+\frac1{p_2}$, we have
\begin{align*}
\|V_\rho(\mathcal{Q}(f,g))\|_{L^p(\mathbb R)}\le C\|f\|_{L^{p_1}(\mathbb R)}\|g\|_{L^{p_2}(\mathbb R)}.
\end{align*}
\end{theorem}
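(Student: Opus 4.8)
The plan is to exploit the fact that the bilinear cube average factorizes as a product of two linear averages at the same scale. Writing $A_t h(x)=\frac1t\int_{|y|\le t/2}h(x-y)\,dy$ for the centered linear average over an interval of length $t$, a direct computation of the double integral gives
$$
Q_t(f,g)(x)=A_t f(x)\,A_t g(x),
$$
so that the family $\mathcal{Q}(f,g)$ is the pointwise product of $\mathcal{A}(f)=\{A_tf\}_{t>0}$ and $\mathcal{A}(g)=\{A_tg\}_{t>0}$. This reduces the bilinear variational estimate to the already-understood linear one.

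The key step is a product rule for the variation norm. For any two families of complex numbers $\{u_t\}$, $\{v_t\}$ and any increasing sequence $t_1<t_2<\cdots$, the identity
$$
u_{t_i}v_{t_i}-u_{t_{i+1}}v_{t_{i+1}}=u_{t_i}(v_{t_i}-v_{t_{i+1}})+(u_{t_i}-u_{t_{i+1}})v_{t_{i+1}},
$$
together with Minkowski's inequality in $\ell^\rho$ and the crude bounds $|u_{t_i}|\le\sup_t|u_t|$, $|v_{t_{i+1}}|\le\sup_t|v_t|$, yields
$$
\|\{u_tv_t\}\|_{V_\rho}\le\Big(\sup_t|u_t|\Big)\|\{v_t\}\|_{V_\rho}+\Big(\sup_t|v_t|\Big)\|\{u_t\}\|_{V_\rho}.
$$
Applying this pointwise with $u_t=A_tf(x)$ and $v_t=A_tg(x)$, and observing that $\sup_{t>0}|A_th(x)|\le Mh(x)$ where $M$ is the Hardy--Littlewood maximal operator, I obtain the pointwise domination
$$
V_\rho(\mathcal{Q}(f,g))(x)\le Mf(x)\,V_\rho(\mathcal{A}(g))(x)+Mg(x)\,V_\rho(\mathcal{A}(f))(x).
$$

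It then remains to take $L^p$ norms and apply H\"older's inequality with $\frac1p=\frac1{p_1}+\frac1{p_2}$. For the second term this gives
$$
\|Mg\cdot V_\rho(\mathcal{A}(f))\|_{L^p}\le\|V_\rho(\mathcal{A}(f))\|_{L^{p_1}}\,\|Mg\|_{L^{p_2}},
$$
and I would conclude using two classical facts: the boundedness of $M$ on $L^{p_2}$ (valid since $p_2>1$), and the linear variational inequality of Jones \emph{et al.}\ \cite{JKRW98,JRW03}, which states that for $\rho>2$ the strong $\rho$-variation operator $V_\rho(\mathcal{A})$ is bounded on $L^{p_1}$ for $1<p_1<\infty$. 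The first term is handled symmetrically, and summing the two estimates gives the claim.

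As for where the difficulty lies: the argument is essentially algebraic once the factorization is noticed, and the product rule holds for every $\rho\ge1$, so the restriction $\rho>2$ is inherited solely from the linear theory. The only point requiring care is that the $\sup_t$ appearing in the product rule must be dominated by a maximal operator bounded on the relevant Lebesgue space, which is precisely why the hypothesis $p_1,p_2>1$ (rather than merely $p>1/2$) is needed. I therefore do not anticipate a genuine obstacle beyond correctly invoking the linear variational inequality as a black box.
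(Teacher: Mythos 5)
Your proof is correct. The factorization $Q_t(f,g)=A_tf\cdot A_tg$, the Leibniz-type bound for the $V_\rho$ norm of a product, the domination of $\sup_{t>0}|A_th|$ by the Hardy--Littlewood maximal function, and the reduction to the linear variational inequality of Jones \emph{et al.} \cite{JKRW98,CJRW2000} are exactly the content of the paper's Proposition 2.2. The difference lies in how the range $\frac12<p\le1$ is reached. The paper states the direct argument only for $1<p,p_1,p_2<\infty$ and then recovers the remaining exponents through a bilinear Marcinkiewicz-type interpolation (Lemma 2.1) between that range and a weak endpoint bound $L^1\times L^1\to L^{1/2,\infty}$, the latter proved by a fairly long bilinear Calder\'{o}n--Zygmund decomposition (Lemma 2.3). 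You instead observe, in effect, that nothing in the direct argument requires $p\ge1$: the generalized H\"older inequality $\|FG\|_{L^p}\le\|F\|_{L^{p_1}}\|G\|_{L^{p_2}}$ holds for all $0<p<\infty$ with $\frac1p=\frac1{p_1}+\frac1{p_2}$ (apply ordinary H\"older with exponents $p_1/p$ and $p_2/p$ to $|F|^p|G|^p$, which is legitimate since $p\le\min(p_1,p_2)$), and the triangle inequality used to sum the two terms need only be replaced by the quasi-triangle inequality $\|A+B\|_{L^p}^p\le\|A\|_{L^p}^p+\|B\|_{L^p}^p$ when $p<1$. You should make those two small points explicit, since your write-up applies H\"older and sums the two terms without commenting on $p$ possibly being below $1$; once they are, your argument proves the theorem in its full range and is substantially shorter than the paper's. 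What the paper's longer route buys is the endpoint estimate of Lemma 2.3, which has independent interest and serves as the template for Section 4, where the kernel of the approximation of the identity does not factor as a product and no analogue of your shortcut is available.
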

In addition to averages $\{Q_t(f,g)\}_{t>0}$, we introduce averages $\{\mathrm{Q}_L(\phi,\psi)\}_{L\in\mathbb N}$ defined on $\phi,\psi:\mathbb Z\rightarrow \mathbb R$ of compact support:
\begin{align*}
 \mathrm{Q}_L(\phi,\psi)(i)=\frac1{(2L+1)^2}\sum_{|l|,|k|\le L}\phi(i-l)\psi(i-k).
 \end{align*}
 The family of discrete averages $\{\mathrm{Q}_L(\phi,\psi)\}_{L\in\mathbb N}$ is denoted by $\mathbf{Q}(\phi,\psi)$. Moreover, we obtain the discrete version of Theorem \ref{var of the Q C} as follows.
\begin{corollary}\label{var of the Q D}
For $\rho>2$, $1<p_1,p_2<\infty$ and $\frac1p=\frac1{p_1}+\frac1{p_2}$, we have
\begin{align*}
\|V_\rho(\mathbf{Q}(\phi,\psi))\|_{L^p(\mathbb Z)}\le C\|\phi\|_{L^{p_1}(\mathbb Z)}\|\psi\|_{L^{p_2}(\mathbb Z)}.
\end{align*}
\end{corollary}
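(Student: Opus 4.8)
The plan is to deduce the discrete inequality from Theorem \ref{var of the Q C} by a Calder\'on-type transference. First I would embed the discrete data into $\mathbb{R}$ by step functions: set
\begin{equation*}
f=\sum_{n\in\mathbb{Z}}\phi(n)\,\mathbf{1}_{[n-1/2,\,n+1/2)},\qquad
g=\sum_{n\in\mathbb{Z}}\psi(n)\,\mathbf{1}_{[n-1/2,\,n+1/2)}.
\end{equation*}
Each map is an isometry on the corresponding Lebesgue space, so that $\|f\|_{L^{p_1}(\mathbb{R})}=\|\phi\|_{L^{p_1}(\mathbb{Z})}$ and $\|g\|_{L^{p_2}(\mathbb{R})}=\|\psi\|_{L^{p_2}(\mathbb{Z})}$; thus the right-hand sides of the two estimates coincide and it suffices to dominate the discrete variation by the continuous one.

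The second step is an exact transference identity at the lattice points. For an integer $i$ and $t=2L+1$, the interval $\{|y|\le t/2\}=[-(L+\tfrac12),\,L+\tfrac12]$ splits into $2L+1$ unit cells centered at $-L,\dots,L$, on each of which $y\mapsto f(i-y)$ is constant, so that
\begin{equation*}
\frac1t\int_{|y|\le t/2}f(i-y)\,dy=\frac1{2L+1}\sum_{|m|\le L}\phi(i-m),
\end{equation*}
and likewise for $g$. Multiplying the two identities gives $Q_{2L+1}(f,g)(i)=\mathrm{Q}_L(\phi,\psi)(i)$ for every $i\in\mathbb{Z}$ and $L\in\mathbb{N}$. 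Since $\{2L+1:L\in\mathbb{N}\}\subset(0,\infty)$, restricting the supremum that defines the variation to this subfamily can only decrease it, whence
\begin{equation*}
V_\rho(\mathbf{Q}(\phi,\psi))(i)\le V_\rho(\mathcal{Q}(f,g))(i)\qquad(i\in\mathbb{Z}).
\end{equation*}

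It then remains to pass from this pointwise bound at the integers to the $\ell^p$ sum, that is, to show $\sum_{i\in\mathbb{Z}}|V_\rho(\mathcal{Q}(f,g))(i)|^p\lesssim\int_{\mathbb{R}}|V_\rho(\mathcal{Q}(f,g))(x)|^p\,dx$, after which Theorem \ref{var of the Q C} applied to $f,g$ closes the argument. I expect this to be the main obstacle, since a sampled sum need not be controlled by the integral of a single fixed function. I would resolve it using the scale invariance of Theorem \ref{var of the Q C} (its constant depends only on $\rho,p_1,p_2$): dilating the embedding by a large integer $R$, i.e.\ replacing $f,g$ by $f_R(x)=f(x/R)$, $g_R(x)=g(x/R)$, one has the homogeneity relations $Q_t(f_R,g_R)(x)=Q_{t/R}(f,g)(x/R)$, hence $V_\rho(\mathcal{Q}(f_R,g_R))(x)=V_\rho(\mathcal{Q}(f,g))(x/R)$, together with $\|f_R\|_{p_1}^{p_1}=R\|f\|_{p_1}^{p_1}$ and $\|g_R\|_{p_2}^{p_2}=R\|g\|_{p_2}^{p_2}$. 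Sampling at the dilated lattice points $Ri$ with scales $R(2L+1)$ reproduces $\mathrm{Q}_L(\phi,\psi)(i)$ exactly, while over the $R$ fine points filling each cell the continuous variation absorbs the sampled sum up to a discrepancy supported on the two boundary cells of each cube, of relative size $O(1/R)$. Averaging over the shifts inside each cell, dividing by $R$, and letting $R\to\infty$ then yields the discrete estimate with the same constant. The delicate technical point is precisely the verification that these lattice-scale boundary discrepancies are negligible after the dilation and do not accumulate in the $\rho$-variation.
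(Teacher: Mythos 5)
Your transference strategy is the right one in spirit, and the first two steps (isometric embedding, exact identity at lattice points, monotonicity of the variation under restriction to the subfamily $t=2L+1$) are correct. But the argument has a genuine gap exactly where you flag it: the passage from the pointwise bound at the integers to the $\ell^p$ sum. With the full-cell embedding $f=\sum_n\phi(n)\mathbf{1}_{[n-1/2,\,n+1/2)}$ the identity $Q_{2L+1}(f,g)(i)=\mathrm{Q}_L(\phi,\psi)(i)$ holds \emph{only} at $x=i$; at a nearby point $i+s$ each factor picks up an error of the form $\frac{s}{2L+1}\big(\phi(i+L+1)-\phi(i-L)\big)$, and the $\rho$-variation in $L$ of such a family is not obviously controllable --- the crude bound $\|a\|_{V_\rho}\le 2\|a\|_{\ell^1}$ produces $\sum_L|\phi(i\pm L)|/(2L+1)$, a discrete fractional-integral expression that is not bounded on $\ell^{p_1}$, so one would need genuine cancellation to proceed. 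Your proposed fix by dilation does not help by itself: both sides of Theorem \ref{var of the Q C} scale identically under $f\mapsto f(\cdot/R)$, so the dilated inequality is the same inequality, and all the work is pushed into the ``averaging over shifts'' step whose key claim (that the boundary discrepancies do not accumulate in the $\rho$-variation) you explicitly leave unverified. As written, the proof is incomplete.

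The paper sidesteps the sampling problem entirely by a better choice of embedding: it supports $f$ and $g$ on the quarter-length subintervals $[n+\frac14,\,n+\frac12]$ of each unit cell, with constant values $\phi(n)$, $\psi(n)$ there. Then the averaging window captures exactly the support pieces belonging to the cells $i-L,\dots,i+L$ for \emph{every} $x$ in the interval $[i,\,i+\frac34]$, so the identity $\mathrm{Q}_L(\phi,\psi)(i)=c\,Q_{t(L)}(f,g)(x)$ holds on a set of measure $\frac34$ inside each cell (the paper states this with constant $4$ and scale $L+\frac12$; the correct normalization appears to be $16$ and $2L+1$, but this does not affect the argument). The sampled sum is then trivially dominated by the integral, $\sum_i|V_\rho(\mathbf{Q}(\phi,\psi))(i)|^p\le C\sum_i\int_i^{i+3/4}|V_\rho(\mathcal Q(f,g))(x)|^p\,dx$, and Theorem \ref{var of the Q C} finishes the proof with no limiting argument. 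If you shrink the support of your step functions in this way, your own argument closes immediately.
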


Let $(X,\Sigma,m,S)$ denote a dynamical system with $(X,\Sigma, m)$ a complete probability space and $S$ an invertible bimeasurable transformation such that $mS^{-1}=m$.
The closely related ergodic averages are given by
$$
\mathfrak{Q}_L(f,g)(x)=\frac1{(2L+1)^2}\sum_{|l_1|,|l_{2}|\le L}f\big(S^{l_1}x\big)g\big(S^{l_2}x\big).
$$
The sequence $\{\mathfrak{Q}_L(f,g)\}_{L}$ is denoted by $\mathscr Q(f,g)$.
Appealing to Corollary \ref{var of the Q D} and standard transfer methods like in \cite{DTH08,D07}, we get
\begin{corollary}\label{var of the ergodic Q C}
For $\rho>2$, $1<p_1,p_2<\infty$ and $\frac1p=\frac1{p_1}+\frac1{p_2}$, we have
\begin{align*}
\|V_\rho(\mathscr{Q}(f,g))\|_{L^p(X)}\le C\|f\|_{L^{p_1}(X)}\|g\|_{L^{p_2}(X)}.
\end{align*}
Moreover, for every dynamical system $(X,\Sigma,m,S)$, the averages over squares
$$
\frac1{(2N+1)^2}\sum_{i=-N}^{N}\sum_{j=-N}^{N}f\big(S^ix\big)g\big(S^jx\big)
$$
converge a.e. for $f\in L^{p_1}(X)$ and $g\in L^{p_2}(X)$.
\end{corollary}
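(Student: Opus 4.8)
The plan is to deduce Corollary~\ref{var of the ergodic Q C} from the discrete inequality of Corollary~\ref{var of the Q D} by a Calder\'on-type transference, and then to read the pointwise convergence off the finiteness of the variation. The first observation I would record is that all three families split as products of one-parameter averages: writing $M_Lf(x)=\frac1{2L+1}\sum_{|l|\le L}f(S^lx)$ for the two-sided Birkhoff average and $a_L\phi(i)=\frac1{2L+1}\sum_{|l|\le L}\phi(i-l)$ for its discrete analogue, one has $\mathfrak Q_L(f,g)=M_Lf\cdot M_Lg$ and $\mathrm Q_L(\phi,\psi)=a_L\phi\cdot a_L\psi$. Consequently, along an orbit the two families coincide: if $\phi_x(n)=f(S^nx)$ and $\psi_x(n)=g(S^nx)$, then $\mathrm Q_L(\phi_x,\psi_x)(j)=\mathfrak Q_L(f,g)(S^jx)$ whenever the indices used by $a_L$ stay inside the support of $\phi_x,\psi_x$. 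This identity is what transports the discrete estimate to $X$.

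For the transference I would fix the truncation level $N$ and work with the variation $V_\rho^N$ taken only over $1\le L\le N$; this is finite and increases to $V_\rho$ as $N\to\infty$. Fix an integer $K\gg N$ and set $\phi_x(n)=f(S^nx)\,\mathbf 1_{\{|n|\le K+N\}}$ and $\psi_x(n)=g(S^nx)\,\mathbf 1_{\{|n|\le K+N\}}$, which are of compact support. For $|j|\le K$ and $1\le L\le N$ every index invoked by $a_L$ lies in the window, so $\mathrm Q_L(\phi_x,\psi_x)(j)=\mathfrak Q_L(f,g)(S^jx)$, whence $V_\rho^N(\mathscr Q(f,g))(S^jx)=V_\rho^N(\mathbf Q(\phi_x,\psi_x))(j)\le V_\rho(\mathbf Q(\phi_x,\psi_x))(j)$. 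Applying Corollary~\ref{var of the Q D} to $\phi_x,\psi_x$, restricting the left sum to $|j|\le K$, and integrating in $x$ gives
\begin{equation*}
(2K+1)\,\|V_\rho^N(\mathscr Q(f,g))\|_{L^p(X)}^p\le C^p\int_X\|\phi_x\|_{L^{p_1}(\mathbb Z)}^p\,\|\psi_x\|_{L^{p_2}(\mathbb Z)}^p\,dm(x),
\end{equation*}
where the factor $2K+1$ arises from the measure-preservation identity $\int_X H(S^jx)\,dm=\int_X H\,dm$, valid for all $j\in\mathbb Z$ since $S$ is invertible and $mS^{-1}=m$.

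The crux is the right-hand integral, and here the bilinear structure matches the hypothesis $\frac1p=\frac1{p_1}+\frac1{p_2}$ exactly. I would apply H\"older in $x$ with the exponents $u=p_1/p$ and $v=p_2/p$, conjugate precisely because $\frac1u+\frac1v=\frac{p}{p_1}+\frac{p}{p_2}=1$; then $\|\phi_x\|_{L^{p_1}(\mathbb Z)}^{pu}=\sum_{|n|\le K+N}|f(S^nx)|^{p_1}$ and similarly for $\psi_x$, so integrating each factor and using measure preservation once more produces $(2K+2N+1)^{p/p_1}\|f\|_{L^{p_1}(X)}^p$ and $(2K+2N+1)^{p/p_2}\|g\|_{L^{p_2}(X)}^p$, whose product carries the single power $(2K+2N+1)$. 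Dividing by $2K+1$, sending $K\to\infty$ so that $\frac{2K+2N+1}{2K+1}\to1$, and finally letting $N\to\infty$ by monotone convergence yields $\|V_\rho(\mathscr Q(f,g))\|_{L^p(X)}\le C\|f\|_{L^{p_1}(X)}\|g\|_{L^{p_2}(X)}$. The a.e.\ convergence is then immediate: the inequality forces $V_\rho(\mathscr Q(f,g))(x)<\infty$ for a.e.\ $x$, and as observed after \eqref{defini of Vq(F)} finiteness of the $\rho$-variation makes $\{\mathfrak Q_L(f,g)(x)\}_L$ convergent as $L\to\infty$; since $\mathfrak Q_N(f,g)(x)$ is exactly $(2N+1)^{-2}\sum_{i,j=-N}^{N}f(S^ix)g(S^jx)$, this is the asserted convergence over squares. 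I expect the only delicate point to be bookkeeping the boundary layer of width $N$ in the window against the $|j|\le K$ summation, together with routine measurability of $x\mapsto V_\rho^N(\mathbf Q(\phi_x,\psi_x))$; both are harmless once $K$ is sent to infinity before $N$.
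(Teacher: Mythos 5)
Your argument is correct and is exactly the ``standard transfer method'' that the paper invokes (without writing it out) to deduce this corollary from Corollary \ref{var of the Q D}: the Calder\'on transference along orbits, with the truncated variation $V_\rho^N$, the window of width $K+N$, the measure-preservation identity, and H\"older with the conjugate exponents $p_1/p$, $p_2/p$ to absorb the single factor of $2K+2N+1$. The deduction of a.e.\ convergence over squares from the finiteness of $V_\rho(\mathscr Q(f,g))$ also matches the paper's stated rationale, so nothing further is needed.
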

\par
For $j,m\in\mathbb Z$, the dyadic interval in $\mathbb R$ is an interval of the form $[m2^j.(m+1)2^j)$. The set of all dyadic intervals with side-length $2^j$ is denoted by $\mathcal D_j$. The conditional expectation of a local integrable $f$ with respect to the increasing family of $\sigma-$algebras $\sigma(\mathcal D_j)$ generated by $\mathcal D_j$ is given by
$$
\mathbb E_jf(x)=\sum_{I\in \mathcal D_j}\frac1{|I|}\int_{I}f(y)dy\cdot\chi_{I}(x)
$$
for all $j\in\mathbb Z$. In view of the Lebesgue differentiation theorem, we have that
$$
\lim_{j\rightarrow \infty}\mathbb E_jf\rightarrow f,\ a.e.
$$
for $f\in L^2(\mathbb R)$. $\{\mathbb E_jf\}_j$ can be looked as a family of averages which are constructed from $f$ by certain averaging process. Moreover, there is a close connection between the martingale sequence $\{\mathbb E_jf\}_j$ and averages over cubes \cite{JKRW98,JRW03,JSW08}. Therefore, we consider the bilinear conditional expectation of two local integrable $f$ and $g$, which is given by
$$
\mathbb E_j(f,g)(x)=\sum_{I,J\in \mathcal D_j}\frac1{|I\times J|}\int_{I\times J}f(y)g(z)dydz\cdot\chi_{I\times J}(x,x).
$$
For the bilinear conditional expectation, we obtain the following variational inequality.
\begin{theorem}\label{var of Mar}
For $\rho>2$, $1<p_1,p_2<\infty$ and $\frac1p=\frac1{p_1}+\frac1{p_2}$, we have
\begin{align*}
\|V_\rho(\{\mathbb E_j(f,g)\}_j)\|_{L^p(\mathbb R)}\le C\|f\|_{L^{p_1}(\mathbb R)}\|g\|_{L^{p_2}(\mathbb R)}.
\end{align*}
\end{theorem}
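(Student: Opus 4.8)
The plan is to reduce the bilinear statement to the classical linear variational inequality for martingales, exploiting the fact that the diagonal restriction makes $\mathbb E_j(f,g)$ factor as a product. The first observation is that the double sum collapses: since $I,J\in\mathcal D_j$ have the same side-length $2^j$ and the dyadic intervals of a fixed scale tile $\mathbb R$, the factor $\chi_{I\times J}(x,x)=\chi_I(x)\chi_J(x)$ is nonzero only when $x\in I\cap J$, which forces $I=J$. Hence
$$\mathbb E_j(f,g)(x)=\sum_{I\in\mathcal D_j}\Big(\frac1{|I|}\int_I f\Big)\Big(\frac1{|I|}\int_I g\Big)\chi_I(x)=\mathbb E_j f(x)\cdot\mathbb E_j g(x),$$
so the bilinear conditional expectation is simply the product of the two scalar dyadic martingales $\mathbb E_j f$ and $\mathbb E_j g$.

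Next I would use a Leibniz-type inequality for the variation norm. For two sequences $\{a_j\},\{b_j\}$ and any increasing chain $j_1<j_2<\cdots$, the telescoping identity $a_{j_i}b_{j_i}-a_{j_{i+1}}b_{j_{i+1}}=(a_{j_i}-a_{j_{i+1}})b_{j_i}+a_{j_{i+1}}(b_{j_i}-b_{j_{i+1}})$ combined with the triangle inequality in $\ell^\rho$ gives, after taking the supremum over chains, the pointwise bound
$$V_\rho(\{\mathbb E_j(f,g)\})(x)\le \big(\sup_j|\mathbb E_j g(x)|\big)\,V_\rho(\{\mathbb E_j f\})(x)+\big(\sup_j|\mathbb E_j f(x)|\big)\,V_\rho(\{\mathbb E_j g\})(x).$$
The two suprema are exactly the dyadic maximal functions, which I denote $Mg(x)$ and $Mf(x)$, and which are controlled by the Hardy--Littlewood maximal function (equivalently, by Doob's maximal inequality).

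Finally I would apply H\"older's inequality with exponents $p_1,p_2$ to each of the two resulting products and invoke two classical linear estimates: the L\'epingle variational inequality for the martingale $\{\mathbb E_j h\}$, namely $\|V_\rho(\{\mathbb E_j h\})\|_{L^q}\le C\|h\|_{L^q}$ for $\rho>2$ and $1<q<\infty$ (cf. \cite{Lep76,JKRW98}), together with the $L^q$-boundedness of the dyadic maximal function $\|Mh\|_{L^q}\le C\|h\|_{L^q}$. This yields
$$\|V_\rho(\{\mathbb E_j(f,g)\})\|_{L^p}\le \|V_\rho(\{\mathbb E_j f\})\|_{L^{p_1}}\|Mg\|_{L^{p_2}}+\|Mf\|_{L^{p_1}}\|V_\rho(\{\mathbb E_j g\})\|_{L^{p_2}}\le C\|f\|_{L^{p_1}}\|g\|_{L^{p_2}},$$
which is the claimed bound. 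I do not expect a genuine obstacle here: once the product identity $\mathbb E_j(f,g)=\mathbb E_j f\cdot\mathbb E_j g$ is noticed, the argument is a clean combination of the scalar product rule for $V_\rho$ with two standard estimates. The only point requiring care is that the linear $V_\rho$-bound must be available on the full range $1<q<\infty$ rather than just at $q=2$, which is precisely what L\'epingle's theorem provides for $\rho>2$; this is also why the diagonal martingale case is considerably softer than the analogous statement for the averages $\mathcal Q(f,g)$, whose kernels do not factor through a single scale.
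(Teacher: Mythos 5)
Your proof is correct, and its core is the same as the paper's: the factorization $\mathbb E_j(f,g)=\mathbb E_jf\cdot\mathbb E_jg$, the Leibniz-type rule for the $V_\rho$ norm, and the combination of H\"older's inequality, the dyadic maximal inequality and L\'epingle's theorem are precisely the content of the paper's Proposition \ref{var of bilinear con exp}. Where you genuinely diverge is in how the range $\tfrac12<p\le1$ is reached. The paper states that proposition only for $1<p<\infty$ and then assembles the full theorem from two extra pieces: a weak endpoint bound $L^1\times L^1\to L^{1/2,\infty}$ (Proposition \ref{weak var of bilinear con exp}, proved by a bilinear Calder\'on--Zygmund decomposition in which all terms involving the bad parts vanish off the exceptional sets because $\mathbb E_jb_{i,k}$ is supported in $\tilde I_{i,k}$) and the vector-valued bilinear Marcinkiewicz interpolation of Lemma \ref{B interpolation}. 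You instead observe that H\"older's inequality $\|FG\|_{L^p}\le\|F\|_{L^{p_1}}\|G\|_{L^{p_2}}$ holds for every $p>0$ with $\tfrac1p=\tfrac1{p_1}+\tfrac1{p_2}$, so the direct argument already covers $\tfrac12<p\le1$; the only care needed there is the quasi-triangle constant of $L^p$ for $p<1$ when you add the two terms, which is harmless. This is a real simplification for this particular theorem --- the paper's own Remark after Proposition \ref{var of bilinear con exp} concedes the direct argument reaches $p=1$ --- and the interpolation-plus-endpoint scaffolding is only truly needed for the theorems whose kernels do not factor through a single scale, exactly as you note at the end.
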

Other family of bilinear averages are carried out by a suitable "approximation of the identity" as follows. Fix $\phi\in \mathscr{S}(\mathbb R^2)$ with $\int_{\mathbb R^2}\phi(x)dx =1$. For $t>0$, set $\phi_t(x,y)=t^{-2}\phi(x/t,y/t)$. The bilinear convolution operators are given by
\begin{equation*}
\phi_t(f,g)(x)=\int_{\mathbb R^2}\phi_t(x-y,x-z)f(y)g(z)dydz.
\end{equation*}
We denote $\{\phi_t(f,g)\}_{t>0}$ by $\Phi(f,g)$. In this setting we obtain the variational estimate as follows.
 \begin{theorem}\label{var of the 1.1}
For $\rho>2$, $1<p_1,p_2<\infty$ and $\frac1p=\frac1{p_1}+\frac1{p_2}$, we have
\begin{align*}
\|V_\rho(\Phi(f,g))\|_{L^p(\mathbb R)}\le C\|f\|_{L^{p_1}(\mathbb R)}\|g\|_{L^{p_2}(\mathbb R)}.
\end{align*}
\end{theorem}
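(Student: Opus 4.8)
The plan is to split the full $\rho$-variation into a \emph{long} variation over the dyadic scales $t=2^k$ and a \emph{short} variation inside each dyadic block $[2^k,2^{k+1}]$, thereby reducing Theorem~\ref{var of the 1.1} to two bilinear square-function estimates that can be attacked by Fourier/bilinear Calder\'on--Zygmund methods. Concretely, I would use the standard reduction (see \cite{JSW08}): for $\rho>2$ and a.e.\ $x$,
\[
V_\rho(\Phi(f,g))(x)\lesssim V_\rho\big(\{\phi_{2^k}(f,g)(x)\}_{k\in\mathbb Z}\big)+\Big(\sum_{k\in\mathbb Z}\big\|\{\phi_t(f,g)(x)\}_{t\in[2^k,2^{k+1}]}\big\|_{V_\rho}^2\Big)^{1/2}.
\]
After a routine density/regularization step (so that $t\mapsto\phi_t(f,g)(x)$ is $C^1$), the two summands are handled separately.

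For the short variation, since $\rho\ge 1$ one has $\|\cdot\|_{V_\rho}\le\|\cdot\|_{V_1}$, and the $V_1$-norm on a block equals the total variation $\int_{2^k}^{2^{k+1}}|\partial_t\phi_t(f,g)(x)|\,dt$. A direct computation gives $t\,\partial_t\phi_t=\psi_t$ with $\psi=-(2+u\partial_u+v\partial_v)\phi\in\mathscr S(\mathbb R^2)$ and $\int_{\mathbb R^2}\psi=0$. Cauchy--Schwarz with respect to $dt/t$ then yields
\[
\big\|\{\phi_t(f,g)(x)\}_{t\in[2^k,2^{k+1}]}\big\|_{V_\rho}\lesssim\Big(\int_{2^k}^{2^{k+1}}|\psi_t(f,g)(x)|^2\,\frac{dt}{t}\Big)^{1/2},
\]
so the short-variation term is dominated by the bilinear $g$-function $G(f,g)(x)=\big(\int_0^\infty|\psi_t(f,g)(x)|^2\,dt/t\big)^{1/2}$. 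It then suffices to prove $\|G(f,g)\|_{L^p}\lesssim\|f\|_{L^{p_1}}\|g\|_{L^{p_2}}$, which is a bilinear Littlewood--Paley estimate: the cancellation $\int\psi=0$ forces the symbol $\widehat\psi(t\xi_1,t\xi_2)$ to vanish to first order as $t\to0$ and to decay rapidly as $t\to\infty$, so $G$ fits the bilinear Calder\'on--Zygmund framework once one views $(f,g)\mapsto\{\psi_t(f,g)\}_{t>0}$ as a bilinear operator valued in $L^2(dt/t)$.

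For the long variation I would compare the smooth averages with the cube averages already controlled by Theorem~\ref{var of the Q C}. Writing $\beta=\phi-\chi_{[-1/2,1/2]^2}$ (so that $\int_{\mathbb R^2}\beta=0$), I split $\phi_{2^k}(f,g)=Q_{2^k}(f,g)+\beta_{2^k}(f,g)$; then
\[
V_\rho\big(\{\phi_{2^k}(f,g)\}_k\big)\le V_\rho(\mathcal Q(f,g))+V_\rho\big(\{\beta_{2^k}(f,g)\}_k\big),
\]
and, since restricting to dyadic times only decreases the variation, the first term obeys Theorem~\ref{var of the Q C}. For the second, $V_\rho\le V_2$ together with $|a-b|\le|a|+|b|$ gives the pointwise bound $V_\rho(\{\beta_{2^k}(f,g)\}_k)\lesssim\big(\sum_k|\beta_{2^k}(f,g)|^2\big)^{1/2}=:S(f,g)$, again a bilinear square function whose kernel $\beta$ has mean zero; the estimate $\|S(f,g)\|_{L^p}\lesssim\|f\|_{L^{p_1}}\|g\|_{L^{p_2}}$ follows as above. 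One may equally compare with the bilinear martingale $\mathbb E_k(f,g)$ and invoke Theorem~\ref{var of Mar}.

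The main obstacle is the two bilinear square-function bounds for $G$ and $S$. Unlike the linear case, where Plancherel immediately supplies the $L^2$ anchor estimate, here one must produce a single off-diagonal bilinear bound (for instance $L^2\times L^2\to L^1$ through a Fourier/orthogonality computation exploiting the cancellation of $\psi$ and $\beta$), verify the Hilbert-space-valued bilinear Calder\'on--Zygmund kernel conditions, and then interpolate/extrapolate to the full range $1<p_1,p_2<\infty$. A secondary subtlety is that $\beta$ (and the cube kernel $\chi_{[-1/2,1/2]^2}$) decay only mildly in frequency, so some care is needed for the Littlewood--Paley almost-orthogonality and the convergence of $\sum_k$; choosing instead the martingale comparison trades this difficulty for the non-convolution nature of $\mathbb E_k$, which then requires a pointwise domination by a bilinear maximal function using the Schwartz tails of $\phi$ across the dyadic grid.
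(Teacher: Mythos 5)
Your overall architecture (long/short variation splitting \`a la \cite{JSW08}, reduction to bilinear square functions) is reasonable, and your treatment of the short variation is sound: the identity $t\partial_t\phi_t=\psi_t$ with $\psi=-(2\phi+u\partial_u\phi+v\partial_v\phi)$ automatically mean-zero, followed by Cauchy--Schwarz in $dt/t$, reduces that piece to a bilinear $g$-function with a Schwartz, cancellative kernel, exactly the kind of object covered by the bilinear Littlewood--Paley results of \cite{XPY15,SXY17}. This is essentially the paper's short-variation argument (the paper uses the Bergh--Peetre inequality instead of $V_\rho\le V_1$ plus Cauchy--Schwarz, arriving at a geometric mean of two such $g$-functions, but the substance is the same).

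The gap is in your long-variation step. You compare $\phi_{2^k}(f,g)$ with the cube averages $Q_{2^k}(f,g)$, so the residual kernel is $\beta=\phi-\chi_{[-1/2,1/2]^2}$. Although $\int\beta=0$, this kernel is not smooth: the $\ell^2$-valued kernel $\{\beta_{2^k}(y,z)\}_k$ fails the H\"older/regularity condition required by the vector-valued bilinear Calder\'on--Zygmund theorems you would invoke (near the boundary of the cube the difference $\chi_{2^k}(y,z)-\chi_{2^k}(y',z)$ has full size $2^{-2k}$ no matter how small $|y-y'|$ is), and $\widehat{\beta}$ decays too slowly for the hypotheses of \cite{JH12,SXY17,XPY15}. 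In the linear theory one rescues such rough square functions with a Plancherel-based $L^2$ anchor and a further smooth decomposition of $\beta$ with geometrically decaying operator norms; you correctly note that no Plancherel substitute is available bilinearly, but you leave exactly this point unresolved, and it is the crux. The paper sidesteps it entirely by choosing a different comparison object: it writes $\phi_t(f,g)=\varphi_t(f)\cdot\varphi_t(g)+\psi_t(f,g)$ with $\varphi\in\mathscr S(\mathbb R)$, $\int\varphi=1$, so that the product term is controlled pointwise by the Leibniz-type bound $V_\rho(ab)\le\|a\|_\infty V_\rho(b)+\|b\|_\infty V_\rho(a)$ together with the \emph{linear} variational inequalities (and the maximal function), while the residual kernel $\psi=\phi-\varphi\otimes\varphi$ is Schwartz with mean zero, so Hart's bilinear square-function theorem applies to $\big(\sum_j|\psi_{2^j}(f,g)|^2\big)^{1/2}$ without further ado. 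Your argument can be repaired by inserting this intermediate comparison (i.e.\ splitting $\beta=(\phi-\varphi\otimes\varphi)+(\varphi\otimes\varphi-\chi\otimes\chi)$ and handling the second, tensor-product piece by the linear square-function and maximal estimates), but as written the bilinear square-function bound for $S$ does not follow from the tools you cite. Separately, note that the paper reaches the exponents $1/2<p\le1$ by a $\mathcal B$-valued bilinear interpolation (Lemma \ref{B interpolation}) against an $L^1\times L^1\to L^{1/2,\infty}(\mathcal B)$ endpoint proved via vector-valued bilinear Calder\'on--Zygmund theory; your route would have to verify that every square-function ingredient holds in that full quasi-Banach range.
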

In the next section we give the proof of the variational inequality for averages over cubes, which is a consequence of an vector-valued bilinear interpolation and an endpoint estimate for certain vector-valued operator. The discrete analogue is proved at the end of this section. The variational inequality for conditional expectations is treated in the same way in section 3. In final section we prove the variational estimate for approximations of the identity in the similar way. But, the $L^{p_1}\times L^{p_2}\rightarrow L^p$ bounds for all $1<p,p_1,p_2<\infty$ with $\frac1{p_1}+\frac1{p_2}=\frac1p$ and endpoint estimate can not be established directly, since those kernels are not  multiplicatively separable. We apply bilinear vector-valued Calder\'{o}n-Zygmund theory to deal with those problems.
\section{Variational inequality for averages over cubes}
In order to prove Theorem \ref{var of the Q C}, we present an $\mathcal B$-valued bilinear interpolation, where $\mathcal B$ is a Banach space, see \cite{JH12} and \cite{RS69}.
\begin{lemma}\label{B interpolation}
Suppose  and $T$ is a bilinear $\mathcal B$-valued operator. If $T$ is bounded from $L^{p_1}(\mathbb R)\times L^{p_2}(\mathbb R)$ into $L^{p,\infty}(\mathcal B)$ for all $1<p,p_1,p_2<\infty$ with $\frac1{p_1}+\frac1{p_2}=\frac1p$ and from $L^1(\mathbb R)\times L^1(\mathbb R)$ into $L^{1/2,\infty}(\mathcal B)$, then $T$ is bounded from $L^{p_1}(\mathbb R)\times L^{p_2}(\mathbb R)$ into $L^{p,\infty}(\mathcal B)$ for all $1<p_1,p_2<\infty$ with $\frac1{p_1}+\frac1{p_2}=\frac1p$.
\end{lemma}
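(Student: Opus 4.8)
The plan is to deduce the conclusion from the two hypotheses by a two–endpoint bilinear Marcinkiewicz interpolation. Observe first that the only genuinely new content lies in the quasi–Banach range $\frac12<p\le1$: when $p>1$ the asserted estimate is literally part of the hypothesis, so there is nothing to do. Thus I fix $1<p_1,p_2<\infty$ with $\frac1p=\frac1{p_1}+\frac1{p_2}$ and $\frac12<p\le1$, and I work in the plane of reciprocal exponents, writing $P=(1/p_1,1/p_2)$. In these coordinates the hypothesis supplies a weak–type bound $L^{r_1}\times L^{r_2}\to L^{r,\infty}(\mathcal B)$ at every point of the open triangle $R=\{(u,v):0<u<1,\ 0<v<1,\ u+v<1\}$ (here $u=1/r_1$, $v=1/r_2$, so $u+v=1/r<1$ corresponds exactly to $r>1$), together with the single endpoint bound $L^1\times L^1\to L^{1/2,\infty}(\mathcal B)$ at the vertex $(1,1)$.

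The geometric step is to realize $P$ as an interior point of a segment whose two endpoints carry these bounds. Since $1<p_1,p_2<\infty$ forces $0<1/p_1,1/p_2<1$, the displacement $(1/p_1-1,\ 1/p_2-1)$ has both coordinates strictly negative; hence, following the ray emanating from $(1,1)$ through $P$ and continuing past $P$, both coordinates keep decreasing and their sum $u+v$ decreases continuously starting from $2$. Because $1\le 1/p_1+1/p_2<2$, this ray meets the line $u+v=1$ at or beyond $P$, and slightly further out it enters $R$ while both coordinates remain positive (the first coordinate to vanish does so strictly later along the ray than the sum reaches $1$, precisely because $1/p_1,1/p_2<1$). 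Choosing $Q\in R$ to be such a point places $P$ strictly between the vertex $(1,1)$ and $Q$.

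Finally I would apply the $\mathcal B$–valued bilinear real–interpolation theorem of \cite{RS69,JH12} to the two weak–type endpoint estimates, namely $L^1\times L^1\to L^{1/2,\infty}(\mathcal B)$ at $(1,1)$ and $L^{q_1}\times L^{q_2}\to L^{q,\infty}(\mathcal B)$ at $Q$, whose target exponents $1/2$ and $q>1$ are distinct. As in the classical Marcinkiewicz theorem, interpolating between two weak–type endpoints with distinct target exponents yields a strong–type bound at every interior point of the segment; applied at $P$ this gives $L^{p_1}\times L^{p_2}\to L^{p}(\mathcal B)\hookrightarrow L^{p,\infty}(\mathcal B)$, which is the claim. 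The main obstacle is to run this interpolation in the quasi–Banach regime $p<1$ with Banach–space–valued targets and to match the precise hypotheses of the cited theorem; here the fact that we are handed genuine weak–type (rather than merely restricted weak–type) endpoint bounds is exactly what makes the passage to the full range clean.
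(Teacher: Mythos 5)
The paper does not actually prove this lemma: it is stated as a known result with a pointer to \cite{JH12} and \cite{RS69}, so there is no in-paper argument to compare yours against line by line. Your reconstruction is the intended one --- realize the new points $P=(1/p_1,1/p_2)$ with $1\le 1/p_1+1/p_2<2$ as interior points of segments joining the vertex $(1,1)$ (where the weak $L^1\times L^1\to L^{1/2,\infty}(\mathcal B)$ bound lives) to the open triangle $\{u+v<1\}$ (where the hypothesis already holds), and interpolate. Your geometric step is correct and cleanly argued: since $1-1/p_1$ and $1-1/p_2$ are both positive and each is strictly smaller than their sum $2-1/p$, the ray from $(1,1)$ through $P$ crosses $u+v=1$ at parameter $s^*=1/(2-1/p)\ge1$ before either coordinate vanishes, so a point $Q$ of the open triangle can be chosen with $P$ strictly between $(1,1)$ and $Q$. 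The reduction to $1/2<p\le1$ is also correct.

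The one step you should not wave through is the sentence ``as in the classical Marcinkiewicz theorem, interpolating between two weak-type endpoints with distinct target exponents yields a strong-type bound at every interior point.'' For \emph{bilinear} operators this is not the classical Marcinkiewicz argument: splitting $f=f_0+f_1$ and $g=g_0+g_1$ at heights adapted to the two endpoints produces the cross terms $T(f_0,g_1)$ and $T(f_1,g_0)$, which are controlled by neither of your two endpoint bounds. This is precisely why the multilinear Marcinkiewicz theorems used in multilinear Calder\'on--Zygmund theory (Grafakos--Kalton, Grafakos--Torres) require $m+1$ affinely independent points for an $m$-linear operator, not two collinear ones. There are two standard repairs, both available here. (i) Use three points: the hypothesis gives bounds on the whole open triangle, so you may replace your single $Q$ by two points $Q_1,Q_2$ there with $P$ in the interior of the triangle with vertices $(1,1),Q_1,Q_2$; the three-point multilinear Marcinkiewicz theorem then gives the strong bound $L^{p_1}\times L^{p_2}\to L^p(\mathcal B)$ you assert. (ii) Keep two points but use the Lions--Peetre bilinear real-interpolation theorem (proved via the $J$-method, which decomposes $f$ and $g$ into pieces lying in \emph{both} endpoint spaces and so avoids cross terms); this yields only the Lorentz target $L^{p,r}(\mathcal B)$ with $1/r=1/p_1+1/p_2-1$, not $L^p(\mathcal B)$, but since $L^{p,r}\hookrightarrow L^{p,\infty}$ this still proves the lemma as stated, whose conclusion is only weak type. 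Either way the strategy goes through; you just need to cite (or prove) a genuinely bilinear interpolation theorem valid for quasi-Banach targets rather than appeal to the linear Marcinkiewicz mechanism.
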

We take the Banach space $\mathcal B=\{a(t):\|a\|_{\mathcal B}=\|a\|_{V_\rho}<\infty\}$.
Then, $V_\rho(\mathcal{Q}(f,g))(x)=\|\{Q_t(f,g)(x)\}_{t>0}\|_{\mathcal B}$.
Lemma \ref{B interpolation} implies Theorem \ref{var of Mar} is a consequence of the following two propositions.
\begin{proposition}\label{var of the Q P}
For $\rho>2$, $1<p,p_1,p_2<\infty$ and $\frac1p=\frac1{p_1}+\frac1{p_2}$, we have
\begin{align*}
\|V_\rho(\mathcal{Q}(f,g))\|_{L^p(\mathbb R)}\le C\|f\|_{L^{p_1}(\mathbb R)}\|g\|_{L^{p_2}(\mathbb R)}.
\end{align*}
\end{proposition}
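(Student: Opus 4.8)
The plan is to exploit the fact that the cube average is \emph{multiplicatively separable}, so that the bilinear variational estimate collapses onto the classical linear variational inequality together with the Hardy--Littlewood maximal theorem. Write
$$
A_t f(x)=\frac1t\int_{|y|\le t/2}f(x-y)\,dy
$$
for the two-sided linear average of $f$ over the interval of length $t$ centred at $x$, and set $\mathcal A(f)=\{A_tf\}_{t>0}$. Then one has the pointwise identity $Q_t(f,g)(x)=A_tf(x)\,A_tg(x)$ for every $t>0$. The two inputs I would borrow about the linear family are: (i) the uniform bound $\sup_{t>0}|A_tf(x)|\le Mf(x)$, where $M$ is the Hardy--Littlewood maximal operator, together with the maximal theorem $\|\sup_{t}|A_tf|\|_{L^q}\le C\|f\|_{L^q}$ for $1<q\le\infty$; and (ii) the linear variational inequality of Jones \emph{et al.} \cite{JKRW98}, namely $\|V_\rho(\mathcal A(f))\|_{L^q(\mathbb R)}\le C\|f\|_{L^q(\mathbb R)}$ for $\rho>2$ and $1<q<\infty$.

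The second step is a Leibniz-type splitting of the variation. For any increasing sequence $t_1<t_2<\cdots$ in $(0,\infty)$ the product rule for differences gives
$$
Q_{t_{i+1}}(f,g)-Q_{t_i}(f,g)=(A_{t_{i+1}}f-A_{t_i}f)\,A_{t_{i+1}}g+A_{t_i}f\,(A_{t_{i+1}}g-A_{t_i}g).
$$
Taking $\ell^\rho$-norms in $i$, applying Minkowski's inequality, freezing each undifferentiated factor by its supremum (controlled by $M$ via (i)), and passing to the supremum over all sequences, I obtain the pointwise bound
$$
V_\rho(\mathcal Q(f,g))(x)\le Mg(x)\,V_\rho(\mathcal A(f))(x)+Mf(x)\,V_\rho(\mathcal A(g))(x).
$$

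Finally I would take the $L^p(\mathbb R)$ norm and apply H\"older's inequality with exponents $p_1,p_2$ (recall $\frac1p=\frac1{p_1}+\frac1{p_2}$) to each term, estimating the first by $\|V_\rho(\mathcal A(f))\|_{L^{p_1}}\|Mg\|_{L^{p_2}}$ and the second symmetrically. Inserting (ii) and (i)—both valid here since $1<p_1,p_2<\infty$—yields the claimed bound $C\|f\|_{L^{p_1}}\|g\|_{L^{p_2}}$. The argument is genuinely short once separability is noticed, and its only real content is imported from the deep linear inequality; thus the main point is not an obstacle but the structural observation that $Q_t$ factors as a product of two one-variable averages. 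This is exactly what fails for the non-separable kernels treated in the final section and forces the Calder\'on--Zygmund machinery there. The one place demanding care is the freezing step, where the maximal bound must be applied uniformly in the chosen sequence, but this is routine.
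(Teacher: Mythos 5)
Your proposal is correct and is essentially the paper's own proof: the paper likewise writes $Q_t(f,g)=M_t(f)\cdot M_t(g)$, uses the same Leibniz splitting to obtain the pointwise bound $V_\rho(\mathcal Q(f,g))\le M(f)\,V_\rho(\mathcal M(g))+M(g)\,V_\rho(\mathcal M(f))$, and then concludes via H\"older's inequality together with the linear variational inequalities of \cite{JKRW98,CJRW2000} and the maximal theorem. Your write-up is in fact more detailed than the paper's (which compresses the difference-product step into a single ``Similarly, we get''), but there is no substantive difference in the argument.
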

\begin{proof} Similarly, we get
\begin{align*}
V_\rho(\mathcal{Q}(f,g))(x)&=V_\rho(\mathcal M(f)\cdot \mathcal M(g))(x)\\
&\le M(f)(x)\cdot V_\rho(\mathcal M(g))(x)+M(g)(x)\cdot V_\rho(\mathcal M(f))(x).
\end{align*}
By using H\"{o}lder's inequality and the variational inequalities for averages\cite{JKRW98,CJRW2000}, we get the desired result.
\end{proof}
\begin{lemma}\label{weak var of bilinear Q}
For $\rho>2$, we have
\begin{align}\label{Uf w estimate}
\lambda|\{x\in\mathbb R:V_\rho(\mathcal{Q}(f_1,f_2))(x)>\lambda\}|^2\le C\|f_1\|_{L^1(\mathbb R)}\|f_2\|_{L^1(\mathbb R)}
\end{align}
uniformly in $\lambda>0$.
\end{lemma}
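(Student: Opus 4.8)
The plan is to deduce the weak endpoint bound from the pointwise factorization already exploited in the proof of Proposition \ref{var of the Q P}, combined with the known weak-type $(1,1)$ estimates for the two linear operators that appear. Writing $M_t(f)(x)=\frac1t\int_{|y|\le t/2}f(x-y)\,dy$, one has the exact factorization $Q_t(f_1,f_2)(x)=M_t(f_1)(x)\,M_t(f_2)(x)$, so for a fixed increasing sequence $t_1<t_2<\cdots$ the discrete Leibniz identity $a_ib_i-a_{i+1}b_{i+1}=a_i(b_i-b_{i+1})+(a_i-a_{i+1})b_{i+1}$ together with Minkowski's inequality in $\ell^\rho$ gives the pointwise domination
\begin{align*}
V_\rho(\mathcal{Q}(f_1,f_2))(x)\le M(f_1)(x)\,V_\rho(\mathcal M(f_2))(x)+M(f_2)(x)\,V_\rho(\mathcal M(f_1))(x),
\end{align*}
where $M$ is the Hardy--Littlewood maximal function and we use $\sup_t|M_t(f)(x)|\le M(f)(x)$. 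This is precisely the bound recorded in the proof of Proposition \ref{var of the Q P}.

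By this domination and subadditivity of level sets, it suffices to estimate each product separately; by symmetry I treat only $G(x):=M(f_1)(x)\,V_\rho(\mathcal M(f_2))(x)$, reducing the claim to $\lambda\,\bigl|\{x:G(x)>\lambda\}\bigr|^2\le C\|f_1\|_{L^1}\|f_2\|_{L^1}$. Each factor is weak-type $(1,1)$: the maximal function $M$ is weak $(1,1)$, and for $\rho>2$ the linear variation operator $V_\rho(\mathcal M(\cdot))$ is weak $(1,1)$ by the results of Jones et al.\ \cite{JKRW98,CJRW2000}. Thus $|\{M f_1>\alpha\}|\le A\alpha^{-1}\|f_1\|_{L^1}$ and $|\{V_\rho(\mathcal M(f_2))>\beta\}|\le B\beta^{-1}\|f_2\|_{L^1}$.

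The remaining step is the elementary balancing that converts two weak $(1,1)$ bounds into a bilinear weak $(1/2)$ bound with the correct \emph{product} right-hand side. If $G(x)>\lambda$ then for every $\alpha>0$ at least one of $M(f_1)(x)>\alpha$ or $V_\rho(\mathcal M(f_2))(x)>\lambda/\alpha$ holds (otherwise the product is $\le\lambda$), whence
\begin{align*}
\bigl|\{G>\lambda\}\bigr|\le \frac{A\|f_1\|_{L^1}}{\alpha}+\frac{B\,\alpha\,\|f_2\|_{L^1}}{\lambda}.
\end{align*}
Optimizing in $\alpha$ (equivalently, exploiting the scaling $f_1\mapsto sf_1$, $f_2\mapsto s^{-1}f_2$, under which both sides of the asserted inequality are invariant) yields $|\{G>\lambda\}|\le C\lambda^{-1/2}\,(\|f_1\|_{L^1}\|f_2\|_{L^1})^{1/2}$; squaring and multiplying by $\lambda$ gives exactly \eqref{Uf w estimate}. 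The only genuine input is the linear weak $(1,1)$ variation estimate, which I quote; the bilinear endpoint is then purely a level-set argument, the one subtlety being the choice of split $\lambda=\alpha\cdot(\lambda/\alpha)$ needed to produce $\|f_1\|_{L^1}\|f_2\|_{L^1}$ rather than the weaker $(\|f_1\|_{L^1}+\|f_2\|_{L^1})^2$.
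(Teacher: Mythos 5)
Your proof is correct, and it takes a genuinely different --- and considerably shorter --- route than the paper. The paper proves the endpoint bound by a full bilinear Calder\'on--Zygmund argument: both $f_1$ and $f_2$ are decomposed at height $1$ into good and bad parts, the level set is split into six pieces, and the bad--good and bad--bad interactions are controlled by telescoping the variation of $Q_t(b_{1,k},\cdot)$ along a chosen decreasing sequence of scales and then integrating the resulting decay $d(x,I_{1,k})^{-\rho}$ outside the expanded intervals. You instead push the tensor-product structure $Q_t(f_1,f_2)=M_t(f_1)\,M_t(f_2)$ --- which the paper already exploits for the open range in Proposition \ref{var of the Q P} --- all the way to the endpoint: the Leibniz bound reduces everything to products of a maximal function with a linear variation operator, both of which are weak type $(1,1)$ (the latter for $\rho>2$ by Jones et al.), and the splitting $\lambda=\alpha\cdot(\lambda/\alpha)$ optimized in $\alpha$ correctly converts the two weak $(1,1)$ bounds into the weak $L^{1/2}$ bound with the product $\|f_1\|_{L^1}\|f_2\|_{L^1}$ on the right. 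Two caveats: (i) you should cite the weak-type $(1,1)$ form of the linear variational inequality explicitly (it is in \cite{JKRW98}; see also \cite{JSW08}), since the paper's proof of Proposition \ref{var of the Q P} only invokes the $L^p$ statements; and (ii) your argument hinges on the multiplicative separability of the kernel, so --- unlike the paper's Calder\'on--Zygmund template, which is the prototype reused for the non-separable kernels in the proof of Proposition \ref{app var weak} --- it does not transfer to Section 4. For the present lemma, however, your proof is complete and arguably cleaner.
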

\begin{proof}
 By scaling, we can assume that $\lambda=1$. Suppose that $f_1,f_2$ are step functions given by a finite linear combination of characteristic functions of disjoint dyadic intervals. In proving above weak endpoint type estimate, we may assume that
$$
\|f_1\|_{L^1}=\|f_2\|_{L^1}=1.
$$
The general case follows immediately by scaling. It suffices to prove
\begin{align*}
|\{x\in\mathbb R:V_\rho(\mathcal Q(f_1,f_2))(x)>1\}|\le C.
\end{align*}
We apply the Calder\'{o}n-Zygmund decomposition to functions $f_i$ at height $1$ to obtain functions $g_i$, $b_i$ and finite families dyadic intervals $\{I_{i,k}\}_k$ with disjoint interiors such that
$$
f_i=g_i+b_i\ \ \text{and}\ \ b_i=\sum_kb_{i,k}.
$$
For $i=1,2$, we have
$$
\text{support}(b_{i,k})\subseteq I_{i,k}
$$
$$
\int_{I_{i,k}}b_{i,k}(x)dx=0
$$
$$
\int_{I_{i,k}}|b_{i,k}(x)|dx\le C|I_{i,k}|
$$
$$
|\cup_kI_{i,k}|\le C
$$
$$
\|g_i\|_{L^1}\le \|f_i\|_{L^1}=1
$$
$$
\|g_i\|_{L^\infty}\le 2.
$$
For interval $I$, $\tilde{I}$ denotes the interval that is concentric with $I$ and has length $3|I|$. For convenience, we denote $\cup_k\tilde{I}_{1,k}$ and $\cup_i\tilde{I}_{2,i}$ by $\Omega_1$ and $\Omega_2$, respectively. Since
\begin{align*}
|\{x\in\mathbb R:V_\rho(\mathcal Q(f_1,f_2))(x)>1\}|&\le |\{x\in\mathbb R:V_\rho(\mathcal Q(f_1,f_2))(x)>1/4\}|+|\Omega_1|\\
&+|\Omega_2|+|\{x\notin\Omega_1:V_\rho(\mathcal Q(b_1,g_2))(x)>1/4\}|\\
&+|\{x\notin\Omega_2:V_\rho(\mathcal Q(g_1,b_2))(x)>1/4\}|\\
&+|\{x\notin\Omega_1\cup\Omega_2:V_\rho(\mathcal Q(b_1,b_2))(x)>1/4\}|,
\end{align*}
it suffices to estimate each of above six sets. Let us start with the first one. Applying Proposition \ref{var of the Q P}, we observe
\begin{align*}
|\{x\in\mathbb R:V_\rho(\mathcal Q(g_1,g_2))(x)>1/4\}|&\le C\|V_\rho(\mathcal Q(g_1,g_2))\|_{L^1}\le C\|g_1\|_{L^2}\|g_2\|_{L^2}\le C.
\end{align*}
Obviously,$|\Omega_1|+|\Omega_2|\le C$.
Now we turn to the fourth term. For $x\notin\Omega_1$ and $t\in(0,\infty)$, there are at most two $k$'s for which
\begin{equation*}
\frac1{t}\int_{x-\frac t2}^{x+\frac t2}b_{1,k}(y)dy\neq0.
\end{equation*}
 Indeed, it happens only if $I_{1,k}$ contains the starting point or endpoint of $(x-\frac t2,x+\frac t2)$. Hence,
 \begin{align*}
 V_\rho(\mathcal Q(b_1,g_2))(x)&= \sup_{\{t_j\}\searrow0}\bigg(\sum_j\big|\sum_k[M_{t_j}(b_{1,k},g_2)(x)-M_{t_{j+1}}(b_{1,k},g_2)(x)]\big|^\rho\bigg)^{1/\rho}\\
 &\le C\sup_{\{t_j\}\searrow0}\bigg(\sum_j\sum_k|M_{t_j}(b_{1,k},g_2)(x)-M_{t_{j+1}}(b_{1,k},g_2)(x)|^\rho\bigg)^{1/\rho}\\
 &\le C\bigg(\sum_kV_\rho(\mathcal Q(b_{1,k},g_2))^\rho(x)\bigg)^{1/\rho}.
\end{align*}
For $x\notin\tilde{I}_{1,k}$, we assume $x$ is on the right of $I_{1,k}$, the other case can be treated in the same way. We can choose a monotone decreasing sequence $\{t_j(x)\}_j$ approaching $0$ such that
\begin{align*}
V_\rho(\mathcal Q(b_{1,k},g_2))(x)&\le C\sum_j\big|Q_{t_j(x)}(b_{1,k},g_2)(x)-Q_{t_{j+1}(x)}(b_{1,k},g_2)(x)\big|\\
&\lesssim |Q_{t_{j_0}(x)}(b_{1,k},g_2)(x)|+\sum_{j=j_0}^{j_1-1}\big|Q_{t_j(x)}(b_{1,k},g_2)(x)-Q_{t_{j+1}(x)}(b_{1,k},g_2)(x)\big|\\
&+|Q_{t_{j_1}(x)}(b_{1,k},g_2)(x)|\\
&\lesssim \frac1{t_{j_1}(x)}\|b_{1,k}\|_{L^1}+\sum_{j=j_0}^{j_1-1}\big|M_{t_j(x)}(b_{1,k})(x)-M_{t_{j+1}(x)}(b_{1,k})(x)\big|\\
&+\sum_{j=j_0}^{j_1-1}\big|M_{t_j(x)}(g_2)(x)-M_{t_{j+1}(x)}(g_2)(x)\big||M_{t_{j+1}(x)}(b_{1,k})(x)|,
\end{align*}
where $x-t_{j_0}(x)\in I_{1,k}$ and $x-t_{j_0-1}(x)\notin I_{1,k}$, $x-t_{j_1}(x)\in I_{1,k}$ and $x-t_{j_1+1}(x)+x\notin I_{1,k}$, and we have used the fact that $\|M(g_2)\|_{L^\infty}\le 2$. Clearly, $t_{j_1}(x)\sim d(x,I_{1,k})$ for $x\notin I_{1,k}$. Then, the second summand is dominated by
\begin{align*}
&\sum_{j=j_0}^{j_1-1}\big|\frac1{t_j(x)}-\frac1{t_{j+1}(x)}\big|\|b_{1,k}\|_{L^1}+\sum_{j=j_0}^{j_1-1}\frac1{t_{j+1}(x)}\big|\int_{x-\frac{t_j(x)}2}^{x+\frac{t_j(x)}2}b_{1,k}(y)dy-\int_{x-\frac{t_{j+1}(x)}2}^{x+\frac{t_{j+1}(x)}2}b_{1,k}(y)dy\big|\\
&\lesssim \frac1{t_{j_1}(x)}\|b_{1,k}\|_{L^1}\le \frac {C\|b_{1,k}\|_{L^1}}{d(x,I_{1,k})}.
\end{align*}
For the third summand, it is controlled by
\begin{align*}
&\sum_{j=j_0}^{j_1-1}\big|\frac1{t_j(x)}-\frac1{t_{j+1}(x)}\big|\frac{t_{j_0}(x)}{t_{j_1}(x)}\|b_{1,k}\|_{L^1}+\sum_{j=j_0}^{j_1-1}\frac1{t_{j+1}(x)}\big|\int_{x-\frac{t_j(x)}2}^{x+\frac{t_j(x)}2}g_2(z)dz-\int_{x-\frac{t_{j+1}(x)}2}^{x+\frac{t_{j+1}(x)}2}g_2(z)dz\big|\frac{\|b_{1,k}\|_{L^1}}{t_{j_1}(x)}\\
&\lesssim \frac{t_{j_0}(x)}{t^2_{j_1}(x)}\|b_{1,k}\|_{L^1}\lesssim \frac{d(x,I_{1,k})+|I_{1,k}|}{d(x,I_{1,k})^2}\|b_{1,k}\|_{L^1},
\end{align*}
where we used the fact $\|g_2\|_{L^\infty}\le2$ and $\|g_2\|_{L^1}\le 1$.
\par
As a result, we get
\begin{align*}
 \big|\{x\notin\Omega_1:V_\rho(\mathcal Q(b_1,g_2))(x)>1/4\}\big|&\le C\sum_k\int_{(\tilde{I}_{1,k})^c}V_\rho(\mathcal Q(b_{1,k},g_2))^\rho(x)dx\\
 &\le C\sum_k\|b_{1,k}\|_{L^1}^\rho\int_{(\tilde{I}_{1,k})^c}\frac{(d(x,I_{1,k})+|I_{1,k}|)^\rho}{d(x,I_{1,k})^{2\rho}}dx\\
 &\le C\sum_k\|b_{1,k}\|_{L^1}^\rho |I_{1,k}|^{1-\rho}\le C\sum_k|I_{1,k}|\le C.
\end{align*}
 The fifth term can be treated in the similar way, we obtain
 \begin{align*}
 \big|\{x\notin\Omega_2:V_\rho(\mathcal Q(g_1,b_2))(x)>1/4\}\big|\le C.
\end{align*}
For the last one, we write
\begin{align*}
b_1(y)b_2(z)&=\sum_kb_{1,k}(y)\sum_{i:|I_{2,i}|\le|I_{1,k}|}b_{2,i}(z)+\sum_ib_{2,i}(z)\sum_{k:|I_{1,k}|\le|I_{2,i}|}b_{1,k}(y)\\
&:=\sum_kb_{1,k}(y)b_2^{(k)}(z)+\sum_ib_{2,i}(z)b_1^{(i)}(y).
\end{align*}
Then, for $x\notin\Omega_1\cup\Omega_2$, we observe that
\begin{align*}
 V_\rho(\mathcal Q(b_1,b_2))(x)\le \bigg(\sum_kV_\rho(\mathcal Q(b_{1,k},b_2^{(k)}))^\rho(x)\bigg)^{1/\rho}+\bigg(\sum_iV_\rho(\mathcal Q(b_1^{(i)},b_{2,i}))^\rho(x)\bigg)^{1/\rho},
\end{align*}
where we use the fact that for $x\notin\Omega_1\cup\Omega_2$ and $t\in(0,\infty)$, there are at most two $k$'s and two $i$'s  for which
\begin{equation*}
\frac1{t}\int_{x-\frac t2}^{x+\frac t2}b_{1,k}(y)dy\neq0\ \ \text{and}\ \ \frac1{t}\int_{x-\frac t2}^{x+\frac t2}b_{2,i}(z)dz\neq0.
\end{equation*}
Hence, we see that
\begin{align*}
|\{x\notin\Omega_1\cup \Omega_2:V_\rho(\mathcal Q(b_1,b_2))(x)>1/4\}|&\le |\{x\notin\Omega_1\cup \Omega_2:\big(\sum_kV_\rho(\mathcal Q(b_{1,k},b_2^{(k)}))^\rho\big)^{\frac1\rho}(x)>1/8\}|\\
&+|\{x\notin\Omega_1\cup \Omega_2:\big(\sum_iV_\rho(\mathcal Q(b_1^{(i)},b_{2,i}))^\rho\big)^{\frac1\rho}(x)>1/8\}|.
\end{align*}
It suffices to consider the first term, the other one can be treated in the same way. For $x\notin\Omega_1\cup \Omega_2$ and $t>d(x,I_{1,k})$ such that $M_t(b_{1,k})(x)\neq0$, there are at most two summands $b_{2,i}$ in $b_2^{(k)}$ for which
\begin{equation*}
\int_{x-\frac t2}^{x+\frac t2}b_{2,i}(z)dz\neq0\ \ \text{and}\ \ \big|\int_{x-\frac t2}^{x+\frac t2}b_{2,i}(z)dz\big|\le |I_{2,i}|\le |I_{1,k}|.
\end{equation*}
Notice that dyadic intervals $\{I_{2,i}\}_i$ are with disjoint interiors. Moreover, for above $x$ and $t$, we obtain
\begin{align*}
|M_t(b_2^{(k)})|\le \frac{2|I_{1,k}|}{d(x,I_{1,k})}\le 2\ \ \text{and}\ \ |Q_t(b_{1,k},b_2^{(k)})|&\le \frac1t\|b_{1,k}\|_{L^1}M_t(b_2^{(k)})\le \frac2t\|b_{1,k}\|_{L^1}.
\end{align*}
For $x\notin I_{1,k}\cup \Omega_2$, we assume $x$ is on the right of $I_{1,k}$. We can choose a monotone decreasing sequence $\{t_j(x)\}_j$ approaching $0$ such that
\begin{align*}
V_\rho(\mathcal Q(b_{1,k},b_2^{(k)}))(x)&\le C\sum_j\big|Q_{t_j(x)}(b_{1,k},b_2^{(k)})(x)-Q_{t_{j+1}(x)}(b_{1,k},b_2^{(k)})(x)\big|\\
&\lesssim |Q_{t_{j_0}(x)}(b_{1,k},b_2^{(k)})(x)|+\sum_{j=j_0}^{j_1-1}\big|Q_{t_j(x)}(b_{1,k},b_2^{(k)})(x)-Q_{t_{j+1}(x)}(b_{1,k},b_2^{(k)})(x)\big|\\
&+|Q_{t_{j_1}(x)}(b_{1,k},b_2^{(k)})(x)|\\
&\lesssim\frac1{t_{j_1}(x)}\|b_{1,k}\|_{L^1}+\sum_{j=j_0}^{j_1-1}\big|M_{t_j(x)}(b_{1,k})(x)-M_{t_{j+1}(x)}(b_{1,k})(x)\big||M_{t_j(x)}(b_2^{(k)})(x)|\\
&+\sum_{j=j_0}^{j_1-1}\big|M_{t_j(x)}(b_2^{(k)})(x)-M_{t_{j+1}(x)}(b_2^{(k)})(x)\big||M_{t_{j+1}(x)}(b_{1,k})(x)|,
\end{align*}
where $x-t_{j_0}(x)\in I_{1,k}$ and $x-t_{j_0-1}(x)\notin I_{1,k}$, $x-t_{j_1}(x)\in I_{1,k}$ and $x-t_{j_1+1}(x)\notin I_{1,k}$. The second summand is dominated by
\begin{align*}
&\sum_{j=j_0}^{j_1-1}\big|\frac1{t_j(x)}-\frac1{t_{j+1}(x)}\big|\|b_{1,k}\|_{L^1}+\sum_{j=j_0}^{j_1-1}\frac1{t_{j+1}(x)}\big|\int_{x-\frac{t_j(x)}2}^{x+\frac{t_j(x)}2}b_{1,k}(y)dy-\int_{x-\frac{t_{j+1}(x)}2}^{x+\frac{t_{j+1}(x)}2}b_{1,k}(y)dy\big|\\
&\lesssim \frac1{t_{j_1}(x)}\|b_{1,k}\|_{L^1}\le \frac {C\|b_{1,k}\|_{L^1}}{d(x,I_{1,k})}.
\end{align*}
We estimate the third summand as
\begin{align*}
&\sum_{j=j_0}^{j_1-1}\big|\frac1{t_j(x)}-\frac1{t_{j+1}(x)}\big|\frac{|I_{1,k}|}{t_{j_1}(x)}\|b_{1,k}\|_{L^1}+\sum_{j=j_0}^{j_1-1}\frac{\|b_{1,k}\|_{L^1}}{t^2_{j_1}(x)}\big|\int_{x-\frac{t_j(x)}2}^{x+\frac{t_j(x)}2}b_2^{(k)}(z)dz-\int_{x-\frac{t_{j+1}(x)}2}^{x+\frac{t_{j+1}(x)}2}b_2^{(k)}(z)dz\big|\\
&\lesssim \frac{|I_{1,k}|}{t^2_{j_1}(x)}\|b_{1,k}\|_{L^1}\le \frac {C\|b_{1,k}\|_{L^1}}{d(x,I_{1,k})},
\end{align*}
where we use the fact $|I_{1,k}|\le t_{j_1}(x)$. Finally, using Chebyshev's inequality,
\begin{align*}
 \big|\{x\notin\Omega_1\cup \Omega_2:\big(\sum_kV_\rho(\mathcal Q(b_{1,k},b_2^{(k)}))^\rho\big)^{\frac1\rho}(x)>1/8\}\big|&\le C\sum_k\int_{(\tilde{I}_{1,k})^c}V_\rho(\mathcal Q(b_{1,k},b_2^{(k)}))^\rho(x)dx\\
 &\le C\sum_k\|b_{1,k}\|_{L^1}^\rho\int_{(\tilde{I}_{1,k})^c}\frac1{d(x,I_{1,k})^{\rho}}dx\\
 &\le C\sum_k\|b_{1,k}\|_{L^1}^\rho |I_{1,k}|^{1-\rho}\le C.
\end{align*}
This completes the proof of Proposition \ref{weak var of bilinear Q}.
\end{proof}
Now let turn to the proof of Corollary \ref{var of the Q D}.
\begin{proof}
For each $\phi,\psi:\mathbb Z\rightarrow\mathbb Z$ we consider functions like $f:\mathbb R\rightarrow \mathbb R$ with
\begin{equation*}
f(x)= \begin{cases}

   \phi([x]), &\mbox{$[x]+\frac14\le x\le[x]+\frac12$,}\\

   0, &\mbox{otherwise,}

   \end{cases}
\end{equation*}
and $g:\mathbb R\rightarrow \mathbb R$ with
\begin{equation*}
g(x)= \begin{cases}

   \psi([x]), &\mbox{$[x]+\frac14\le x\le[x]+\frac12$,}\\

   0, &\mbox{otherwise.}
\end{cases}
\end{equation*}
For $L\in\mathbb N$ and $i\in\mathbb Z$, we observe that
$$
\mathrm{Q}_L(\phi,\psi)(i)=4Q_{L+\frac12}(f,g)(x),\ \ x\in[i,i+\frac34].
$$
Further, we get that
$$
V_\rho\big(\mathbf{Q}(\phi,\psi)\big)(i)\le4V_\rho\big(\mathcal Q(f,g)\big)(x),\ \ x\in[i,i+\frac34].
$$
For the variational inequality for averages over cubes in Theorem \ref{var of the Q C} we deduce that
\begin{align*}
\|V_\rho\big(\mathbf{Q}(\phi,\psi)\big)\|_{l^p(\mathbb Z)}&=\bigg(\sum_i\big|V_\rho\big(\mathbf{Q}(\phi,\psi)\big)(i)\big|^p\bigg)^{1/p}\\
&\le 4\big(\frac43\big)^{1/p}\bigg(\sum_i\int_{i}^{i+3/4}\big|V_\rho\big(\mathcal Q(f,g)\big)(x)\big|^pdx\bigg)^{1/p}\\
&\le4\big(\frac43\big)^{1/p}\big\|V_\rho\big(\mathcal Q(f,g)\big)\big\|_{L^p(\mathbb R)}\le C\|f\|_{L^{p_1}(\mathbb R)}\|g\|_{L^{p_2}(\mathbb R)}\\
&\le C\|\phi\|_{l^{p_1}(\mathbb Z)}\|\psi\|_{l^{p_2}(\mathbb Z)}.
\end{align*}
\end{proof}
\section{Variational inequality for conditional expectations}
In the same way, we apply Lemma \ref{B interpolation} and take the Banach space $\mathcal B=\{a(j):\|a\|_{\mathcal B}=\|a\|_{V_\rho}<\infty\}$.
Then, $V_\rho(\{\mathbb E_j(f,g)\}_j)=\|\{\mathbb E_j(f,g)\}_j\|_{\mathcal B}$.
Lemma \ref{B interpolation} implies Theorem \ref{var of Mar} is a consequence of the following two propositions.
\begin{proposition}\label{var of bilinear con exp}
For $\rho>2$, $1<p,p_1,p_2<\infty$ and $\frac1p=\frac1{p_1}+\frac1{p_2}$, we have
\begin{align*}
\|V_\rho(\{\mathbb E_j(f,g)\}_j)\|_{L^p(\mathbb R)}\le C\|f\|_{L^{p_1}(\mathbb R)}\|g\|_{L^{p_2}(\mathbb R)}.
\end{align*}
\end{proposition}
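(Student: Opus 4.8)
The plan is to mirror the proof of Proposition \ref{var of the Q P}, the whole point being that the bilinear conditional expectation factors as an honest product of two linear martingales. First I would record the pointwise factorization. Fix $j$ and $x$; since the intervals of $\mathcal D_j$ share the common side-length $2^j$ and are pairwise disjoint or equal, there is a unique $I\in\mathcal D_j$ with $x\in I$, and $\chi_{I\times J}(x,x)=\chi_I(x)\chi_J(x)$ forces $I=J$ in the defining double sum. Hence only the diagonal term survives and
\[
\mathbb E_j(f,g)(x)=\frac1{|I|^2}\Big(\int_I f\Big)\Big(\int_I g\Big)=\mathbb E_j f(x)\cdot\mathbb E_j g(x).
\]
So $\{\mathbb E_j(f,g)(x)\}_j$ is the pointwise product of the two martingale families $\{\mathbb E_j f(x)\}_j$ and $\{\mathbb E_j g(x)\}_j$.

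Next I would invoke the Leibniz-type inequality for the variation norm. Telescoping $u_{j+1}v_{j+1}-u_jv_j=u_{j+1}(v_{j+1}-v_j)+v_j(u_{j+1}-u_j)$ and applying Minkowski's inequality in $\ell^\rho$ along any increasing sequence of indices gives
\[
V_\rho(\{u_jv_j\})\le\sup_j|u_j|\,V_\rho(\{v_j\})+\sup_j|v_j|\,V_\rho(\{u_j\}).
\]
With $u_j=\mathbb E_j f$ and $v_j=\mathbb E_j g$, and writing $\mathbb E^* h(x)=\sup_j|\mathbb E_j h(x)|$ for the dyadic maximal function, this yields the pointwise bound
\[
V_\rho(\{\mathbb E_j(f,g)\}_j)(x)\le \mathbb E^* f(x)\,V_\rho(\{\mathbb E_j g\}_j)(x)+\mathbb E^* g(x)\,V_\rho(\{\mathbb E_j f\}_j)(x),
\]
which is exactly the analogue of the first display in the proof of Proposition \ref{var of the Q P}.

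Finally I would take $L^p$ norms and split by H\"older's inequality with $\frac1p=\frac1{p_1}+\frac1{p_2}$, estimating one factor of each product in $L^{p_1}$ and the other in $L^{p_2}$. Here two classical single-variable inputs close the argument: Doob's maximal inequality gives $\|\mathbb E^* h\|_{L^{q}}\le C\|h\|_{L^{q}}$ for $1<q<\infty$, and L\'epingle's martingale variational inequality (cf. \cite{Lep76,JKRW98}) gives $\|V_\rho(\{\mathbb E_j h\}_j)\|_{L^{q}}\le C\|h\|_{L^{q}}$ for $\rho>2$ and $1<q<\infty$. Combining, both terms are bounded by $C\|f\|_{L^{p_1}}\|g\|_{L^{p_2}}$, which is the claim.

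There is essentially no hard analytic step once the factorization is seen: the restriction $\rho>2$ enters only through L\'epingle's inequality, and the hypothesis $1<p_1,p_2<\infty$ is precisely what Doob and L\'epingle require. The one point demanding care is the verification that $\chi_{I\times J}(x,x)$ collapses the double sum to its diagonal; after that the reasoning is identical in spirit to the cube case. I would note in passing that, because this factorization is exact, the H\"older argument already delivers the estimate for \emph{all} $1<p_1,p_2<\infty$, so the bilinear interpolation Lemma \ref{B interpolation} is in fact not needed here, even though the section is organized around the same two-step template for uniformity.
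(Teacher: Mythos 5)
Your proposal is correct and follows essentially the same route as the paper: the diagonal factorization $\mathbb E_j(f,g)=\mathbb E_jf\cdot\mathbb E_jg$, the telescoping Leibniz bound for the $V_\rho$ norm of a product, and then H\"older combined with Doob's maximal inequality and L\'epingle's variational inequality. Your extra observations (the careful justification that only $I=J$ survives, and that the argument extends below $p=1$ without interpolation) are consistent with the paper, which records the latter point in the remark following the proposition.
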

\begin{proof}
Obviously, we have
$$
\mathbb E_j(f,g)(x)=\sum_{I,J\in \mathcal D_j}\frac1{|I|}\int_{I}f(y)dy\chi_{I}(x)\frac1{|J|}\int_{J}f(y)g(z)dz\chi_{J}(x)=\mathbb E_j(f)(x)\mathbb E_j(g)(x).
$$
Then, we get
\begin{align*}
|\mathbb E_{j_{n+1}}(f,g)-\mathbb E_{j_n}(f,g)|&=|\mathbb E_{j_{n+1}}(f)\mathbb E_{j_{n+1}}(g)-\mathbb E_{j_n}(f)\mathbb E_{j_n}(g)|\\
&\le |\mathbb E_{j_{n+1}}(f)-\mathbb E_{j_n}(f)|\cdot|\mathbb E_{j_{n+1}}(g)|+|\mathbb E_{j_{n+1}}(g)-\mathbb E_{j_n}(g)|\cdot|\mathbb E_{j_n}(f)|.
\end{align*}
By applying H\"{o}lder's inequality and L\'{e}pingle's inequality \cite{Lep76}, we obtain
\begin{align*}
\|V_\rho(\{\mathbb E_j(f,g)\}_j)\|_{L^p(\mathbb R)}&\le \|M(g)\cdot V_\rho(\{\mathbb E_j(f)\}_j)\|_{L^p}+\|M(f)\cdot V_\rho(\{\mathbb E_j(g)\}_j)\|_{L^p}\\
&\le C\|f\|_{L^{p_1}(\mathbb R)}\|g\|_{L^{p_2}(\mathbb R)}.
\end{align*}
This completes the proof of Proposition \ref{var of bilinear con exp}.
\end{proof}
\begin{remark}
In fact, above bilinear variational inequality holds for $p=1$, $1<p_1,p_2<\infty$ and $\frac1p=\frac1{p_1}+\frac1{p_2}$.
\end{remark}
The second proposition is the variational weak endpoint type estimate for conditional expectation sequence.
\begin{proposition}\label{weak var of bilinear con exp}
For $\rho>2$, we have
\begin{align*}
\lambda|\{x\in\mathbb R:V_\rho(\{\mathbb E_j(f_1,f_2)\}_j)(x)>\lambda\}|^2\le C\|f_1\|_{L^1(\mathbb R)}\|f_2\|_{L^1(\mathbb R)}
\end{align*}
uniformly in $\lambda>0$.
\end{proposition}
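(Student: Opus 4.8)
The plan is to run the same Calder\'on--Zygmund scheme as in Lemma \ref{weak var of bilinear Q}, but to exploit the fact that for conditional expectations the bad functions act in an exactly localized way on the Calder\'on--Zygmund cubes, so that the bad parts become essentially trivial. By scaling I may assume $\lambda=1$ and $\|f_1\|_{L^1}=\|f_2\|_{L^1}=1$, so it suffices to bound $|\{x:V_\rho(\{\mathbb E_j(f_1,f_2)\}_j)(x)>1\}|$ by an absolute constant. I apply the Calder\'on--Zygmund decomposition to each $f_i$ at height $1$, writing $f_i=g_i+b_i$ with $b_i=\sum_k b_{i,k}$, dyadic cubes $\{I_{i,k}\}_k$, and the usual properties $\mathrm{supp}\, b_{i,k}\subseteq I_{i,k}$, $\int b_{i,k}=0$, $\|g_i\|_{L^\infty}\le 2$, $\|g_i\|_{L^1}\le 1$, and $|\Omega_i|\le C$ where $\Omega_i:=\bigcup_k I_{i,k}$.

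Using the factorization $\mathbb E_j(f_1,f_2)=\mathbb E_j(f_1)\,\mathbb E_j(f_2)$ established in Proposition \ref{var of bilinear con exp}, I expand bilinearly into the four families $\mathbb E_j(g_1)\mathbb E_j(g_2)$, $\mathbb E_j(b_1)\mathbb E_j(g_2)$, $\mathbb E_j(g_1)\mathbb E_j(b_2)$ and $\mathbb E_j(b_1)\mathbb E_j(b_2)$. Since $V_\rho(\cdot)$ is subadditive in the family (by Minkowski's inequality applied along each increasing sequence of indices), the level set $\{V_\rho(\{\mathbb E_j(f_1,f_2)\})>1\}$ is contained in the union of the four level sets at height $1/4$, and it suffices to bound each. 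For the good--good family I invoke the strong-type estimate of Proposition \ref{var of bilinear con exp} together with the Remark that it persists at $p=1$: taking $p_1=p_2=2$ gives $\|V_\rho(\{\mathbb E_j(g_1,g_2)\}_j)\|_{L^1}\le C\|g_1\|_{L^2}\|g_2\|_{L^2}\le C$, since $\|g_i\|_{L^2}^2\le\|g_i\|_{L^\infty}\|g_i\|_{L^1}\le 2$; Chebyshev's inequality then controls this level set by an absolute constant.

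The decisive point is the treatment of the three families containing a bad factor, and here the martingale structure makes matters far simpler than in the cube case. Because each $I_{i,k}$ is dyadic and $b_{i,k}$ has mean zero on it, for $2^j>|I_{i,k}|$ the cube $I\in\mathcal D_j$ containing a given point satisfies $I\supseteq I_{i,k}$, so $\frac1{|I|}\int_I b_{i,k}=0$ and $\mathbb E_j(b_{i,k})\equiv 0$; for $2^j\le|I_{i,k}|$ the cube $I\in\mathcal D_j$ is either disjoint from $I_{i,k}$ (where the average vanishes) or contained in it, so $\mathbb E_j(b_{i,k})(x)=0$ whenever $x\notin I_{i,k}$. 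Summing over $k$, $\mathbb E_j(b_i)(x)=0$ for every $j$ once $x\notin\Omega_i$. Consequently each of the three bad families vanishes identically off $\Omega_1\cup\Omega_2$, so the corresponding level sets are contained in $\Omega_1\cup\Omega_2$, which has measure $\le C$.

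Adding the four contributions yields the claimed bound by an absolute constant, which after undoing the scaling gives the asserted weak endpoint estimate. I expect the only genuine content to be the exact localization of $\mathbb E_j b_{i,k}$ described above: this is precisely where the dyadic/martingale setting is more favorable than the averages over cubes, since there the bad part is only \emph{approximately} localized and requires the delicate pointwise variation estimates of Lemma \ref{weak var of bilinear Q}, whereas here it is supported exactly inside the exceptional set. The remaining steps are routine bookkeeping common to the cube case.
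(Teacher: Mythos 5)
Your proposal is correct and follows essentially the same route as the paper: scale to $\lambda=1$, apply the Calder\'on--Zygmund decomposition at height $1$, handle the good--good term via the strong-type bound of Proposition \ref{var of bilinear con exp} at $p=1$ (as permitted by the Remark), and observe that $\mathbb E_j(b_{i,k})$ vanishes identically outside $I_{i,k}$ so that every term containing a bad factor is supported in $\Omega_1\cup\Omega_2$. Your explicit case analysis on $2^j$ versus $|I_{i,k}|$ merely spells out the localization fact the paper asserts without proof, so there is no substantive difference.
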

\begin{proof}
By scaling, we assume that $\lambda=1$ and $\|f_1\|_{L^1}=\|f_2\|_{L^1}=1$,
the general case follows immediately by scaling.
It suffices to prove
\begin{align*}
|\{x\in\mathbb R:V_\rho(\{\mathbb E_j(f_1,f_2)\}_j)(x)>1\}|\le  C.
\end{align*}
Analogously, we apply the Calder\'{o}n-Zygmund decomposition to functions $f_i$ at height $1$ to obtain functions $g_i$, $b_i$ and dyadic intervals $\{I_{i,k}\}_k$ such that
$f_i=g_i+b_i\ \ \text{and}\ \ b_i=\sum_kb_{i,k}$. Since
\begin{align*}
|\{x\in\mathbb R:V_\rho(\{\mathbb E_j(f_1,f_2)\}_j)(x)>1\}|&\le |\{x\in\mathbb R:V_\rho(\{\mathbb E_j(g_1,g_2)\}_j)(x)>1/4\}|+|\Omega_1|\\
&+|\Omega_2|+|\{x\notin\Omega_1:V_\rho(\{\mathbb E_j(b_1,g_2)\}_j)(x)>1/4\}|\\
&+|\{x\notin\Omega_2:V_\rho(\{\mathbb E_j(g_1,b_2)\}_j)(x)>1/4\}|\\
&+|\{x\notin\Omega_1\cup\Omega_2:V_\rho(\{\mathbb E_j(b_1,b_2)\}_j)(x)>1/4\}|,
\end{align*}
it suffices to estimate each of above six sets. Applying Proposition \ref{var of bilinear con exp}, we observe
\begin{align*}
|\{x\in\mathbb R:V_\rho(\{\mathbb E_j(g_1,g_2)\}_j)(x)>1/4\}|&\le C\|V_\rho(\{\mathbb E_j(g_1,g_2)\}_j)\|_{L^1}\le C\|g_1\|_{L^2}\|g_2\|_{L^2}\le C.
\end{align*}
Clearly,$|\Omega_1|+|\Omega_2|\le C$. Note that $\mathbb E_j(b_{1,k})(x)=0$ for $x\notin\tilde{I}_{1,k}$. Hence $\mathbb E_j(b_1,g_2)(x)=\mathbb E_j(b_1)(x)\cdot\mathbb E_j(g_2)(x)=0$ for $x\notin\Omega_1$. Consequently,
\begin{align*}
|\{x\notin\Omega_1:V_\rho(\{\mathbb E_j(b_1,g_2)\}_j)(x)>1/4\}|=|\{x\notin\Omega_1\cup\Omega_2:V_\rho(\{\mathbb E_j(b_1,b_2)\}_j)(x)>1/4\}|=0.
\end{align*}
Similarly,
\begin{align*}
|\{x\notin\Omega_2:V_\rho(\{\mathbb E_j(g_1,b_2)\}_j)(x)>1/4\}|=0.
\end{align*}
This proves Proposition \ref{weak var of bilinear con exp}.
\end{proof}

\section{Variational inequality for approximations of the identity}

In order to prove Theorem \ref{var of the 1.1}, we view the kernel $\{\phi_t(y,z)\}_{t>0}$ as having values in the Banach space
\begin{align}\label{def of B}
\mathcal B=\{a(t):\|a\|_{\mathcal B}=\|a\|_{V_\rho}<\infty\}.
\end{align}
Then, $V_\rho(\Phi(f,g))(x)=\|\{\phi_t(f,g)(x)\}_{t>0}\|_{\mathcal B}$. Lemma \ref{B interpolation} implies Theorem \ref{var of the 1.1} is a consequence of the following two propositions:
 \begin{proposition}\label{var of app >1}
For $\rho>2$, $1<p,p_1,p_2<\infty$ and $\frac1p=\frac1{p_1}+\frac1{p_2}$, we have
\begin{align*}
\|V_\rho(\Phi(f,g))\|_{L^p(\mathbb R)}\le C\|f\|_{L^{p_1}(\mathbb R)}\|g\|_{L^{p_2}(\mathbb R)}.
\end{align*}
\end{proposition}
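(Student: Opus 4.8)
The plan is to regard the family $\Phi(f,g)=\{\phi_t(f,g)\}_{t>0}$ as a single $\mathcal B$-valued bilinear operator
$$
\mathbf T(f,g)(x)=\{\phi_t(f,g)(x)\}_{t>0},\qquad V_\rho(\Phi(f,g))(x)=\|\mathbf T(f,g)(x)\|_{\mathcal B},
$$
with $\mathcal B$ the space in \eqref{def of B}, and to prove that $\mathbf T$ is a bilinear $\mathcal B$-valued Calder\'on--Zygmund operator with $\mathcal B$-valued kernel
$$
\mathbf K(x,y,z)=\big\{t^{-2}\phi\big(\tfrac{x-y}{t},\tfrac{x-z}{t}\big)\big\}_{t>0}.
$$
Once (i) a single base estimate $L^2\times L^2\to L^1$ for $\mathbf T$ and (ii) the $\mathcal B$-valued size and regularity conditions for $\mathbf K$ are in hand, the full range $1<p,p_1,p_2<\infty$ with $\frac1p=\frac1{p_1}+\frac1{p_2}$ follows from the bilinear vector-valued Calder\'on--Zygmund theory. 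This is the route forced on us because, unlike the separable kernels of Sections~2 and~3, $\phi$ does not factor and $\mathbf T$ cannot be reduced to linear variation operators.

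For the base estimate I would split the variation into its long and short parts along the lacunary scales $t=2^n$:
$$
V_\rho(\Phi(f,g))(x)\le V_\rho\big(\{\phi_{2^n}(f,g)(x)\}_{n}\big)+\Big(\sum_{n}V_\rho\big(\{\phi_t(f,g)(x)\}_{2^n\le t<2^{n+1}}\big)^\rho\Big)^{1/\rho}.
$$
The short part I would control by the smoothness of $t\mapsto\phi_t(f,g)$: on each dyadic block the variation is dominated, via Cauchy--Schwarz in $t$, by the bilinear square function $\big(\sum_n\int_{2^n}^{2^{n+1}}|t\partial_t\phi_t(f,g)(x)|^2\frac{dt}t\big)^{1/2}$, whose kernel integrates to $\partial_t(1)=0$ and which therefore maps $L^2\times L^2\to L^1$ by almost orthogonality. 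For the long part I would write $\phi_{2^{n+1}}(f,g)-\phi_{2^n}(f,g)$ as a bilinear difference operator with a cancelling kernel (its $(y,z)$-integral is $1-1=0$) and invoke L\'epingle's martingale variational inequality together with the corresponding bilinear square-function bound; here the hypothesis $\rho>2$ is exactly what permits the passage from the $\rho$-variation to the $\ell^2$ square function.

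The kernel conditions are the routine part. Writing $u=x-y$, $v=x-z$ and $R=|u|+|v|$, the bound $\|\cdot\|_{\mathcal B}\le\|\cdot\|_{V_1}$ gives
$$
\|\mathbf K(x,y,z)\|_{\mathcal B}\le\int_0^\infty\big|\partial_t[t^{-2}\phi(u/t,v/t)]\big|\,dt\lesssim\int_0^\infty t^{-3}\big(1+R/t\big)^{-N}dt\lesssim R^{-2},
$$
using the Schwartz decay of $\phi$ and its derivatives, the substitution $s=R/t$ producing the desired $(|x-y|+|x-z|)^{-2}$ decay. Differentiating $\mathbf K$ once in any of $x,y,z$ gains an extra factor $R^{-1}$ and yields the H\"ormander-type regularity estimate in the $\mathcal B$-norm in the same way.

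With these ingredients the bilinear vector-valued Calder\'on--Zygmund theorem upgrades the $L^2\times L^2\to L^1$ bound to $L^{p_1}\times L^{p_2}\to L^p$ for all $1<p,p_1,p_2<\infty$ with $\frac1p=\frac1{p_1}+\frac1{p_2}$, which is the assertion. I expect the main obstacle to be the base estimate, and within it the long (lacunary) variation: because $V_\rho$ of a sequence can be far larger than the $\ell^2$ norm of its consecutive differences, this term is not a pointwise square function and must be extracted through the L\'epingle/square-function argument carried out directly for the non-separable bilinear family, rather than cited off the shelf as in the separable cases.
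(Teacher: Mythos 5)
Your overall architecture inverts the paper's and, as written, has a genuine gap at the step ``the bilinear vector-valued Calder\'on--Zygmund theorem upgrades the $L^2\times L^2\to L^1$ bound to $L^{p_1}\times L^{p_2}\to L^p$ for all $1<p,p_1,p_2<\infty$.'' For scalar bilinear CZ operators this upgrade is true, but its proof passes through the transposes $T^{*1},T^{*2}$ and duality; for a $\mathcal B$-valued operator with $\mathcal B$ the non-reflexive, non-Hilbert space $V_\rho$ that duality is not available, and what the vector-valued theory actually supplies (Lemma~\ref{B weak lemma}) is only the weak endpoint $L^1\times L^1\to L^{1/2,\infty}(\mathcal B)$. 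Interpolating that endpoint against your single point $L^2\times L^2\to L^1$ reaches only the diagonal $p_1=p_2\in(1,2]$ in the $(1/p_1,1/p_2)$-square, not the full open region; this is precisely why the paper's Lemma~\ref{B interpolation} takes boundedness for \emph{all} $1<p,p_1,p_2<\infty$ as a hypothesis, and why Proposition~\ref{var of app >1} must be proved directly on that whole range rather than extracted from one base estimate plus kernel conditions.

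The second, related problem is the long-variation piece of your base estimate, which you correctly flag as the crux but leave unresolved: since $\int\phi=1$, the lacunary family $\{\phi_{2^n}(f,g)\}_n$ has no cancellation, $\big(\sum_n|\phi_{2^n}(f,g)|^2\big)^{1/2}$ diverges, and ``invoke L\'epingle together with the corresponding bilinear square-function bound'' needs a concrete comparison object that you never name. The paper supplies exactly this missing idea: fix $\varphi\in\mathscr S(\mathbb R)$ with $\int\varphi=1$ and split $\phi_t(f,g)=\varphi_t(f)\varphi_t(g)+\psi_t(f,g)$ with $\psi=\phi-\varphi\otimes\varphi$. The separable part \emph{is} reduced to the linear variational theory (product rule for differences, H\"older, and the linear $\rho$-variation inequality for $\{\varphi_t(f)\}_{t>0}$), contrary to your remark that no such reduction is possible; the remainder has $\int\psi=0$, so its long variation is dominated pointwise by the genuine square function $\big(\sum_j|\psi_{2^j}(f,g)|^2\big)^{1/2}$ (this is where $\rho>2$ enters) and is handled by the bilinear square-function theorem of Hart, while the short variation is controlled via Bergh--Peetre by bilinear $g$-functions, much as you suggest. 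Your kernel size and regularity computations are fine and coincide with the paper's Subsection~4.2, but they belong to the proof of the weak endpoint (Proposition~\ref{app var weak}), for which the present Proposition is an input, not an output.
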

\begin{proposition}\label{app var weak}
For $\rho>2$, then
\begin{align*}
\lambda\big|\big\{x\in\mathbb R:V_\rho(\Phi(f,g))(x)>\lambda\big\}\big|^2\le C\|f\|_{L^1(\mathbb R)}\|g\|_{L^1(\mathbb R)}
\end{align*}
for any $\lambda>0$.
\end{proposition}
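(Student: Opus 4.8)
The plan is to treat $T(f,g)(x)=\{\phi_t(f,g)(x)\}_{t>0}$ as a single $\mathcal B$-valued bilinear operator, with $\mathcal B$ the variation space in \eqref{def of B}, and to obtain the endpoint bound from bilinear vector-valued Calder\'on--Zygmund theory rather than from the pointwise factorization used in the previous two sections. The operator $T$ carries the $\mathcal B$-valued bilinear kernel
$$K(x,y,z)=\{\phi_t(x-y,x-z)\}_{t>0},$$
so that $T(f,g)(x)=\iint_{\mathbb R^2}K(x,y,z)f(y)g(z)\,dy\,dz$ with values in $\mathcal B$, and $V_\rho(\Phi(f,g))(x)=\|T(f,g)(x)\|_{\mathcal B}$. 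The strong bound supplied by Proposition \ref{var of app >1} (for instance the $L^2\times L^2\to L^1(\mathcal B)$ estimate) will play the role of the a priori hypothesis in the Calder\'on--Zygmund machinery, and the inequality to be proved is exactly the statement that $T$ maps $L^1(\mathbb R)\times L^1(\mathbb R)$ into $L^{1/2,\infty}(\mathcal B)$.

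The core of the argument is the verification of the $\mathcal B$-valued kernel estimates. For the size condition, writing $u=x-y$, $v=x-z$ and $r=|u|+|v|$, I would bound the variation norm by the total variation (valid since $\rho>2\ge1$),
$$\|K(x,y,z)\|_{\mathcal B}=\big\|\{t^{-2}\phi(u/t,v/t)\}_{t>0}\big\|_{V_\rho}\le \int_0^\infty\big|\partial_t\big(t^{-2}\phi(u/t,v/t)\big)\big|\,dt,$$
and exploit the Schwartz decay of $\phi$ together with the homogeneity under $(u,v,t)\mapsto(\lambda u,\lambda v,\lambda t)$ to conclude $\|K(x,y,z)\|_{\mathcal B}\lesssim r^{-2}=(|x-y|+|x-z|)^{-2}$. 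Applying the same computation to $\partial_x$, $\partial_y$ and $\partial_z$ of the kernel gains one extra power of $t^{-1}$ while producing again a Schwartz profile, so I expect the regularity (H\"ormander) estimates
$$\|K(x,y,z)-K(x',y,z)\|_{\mathcal B}\lesssim\frac{|x-x'|}{(|x-y|+|x-z|)^{3}}$$
whenever $|x-x'|\le\tfrac12(|x-y|+|x-z|)$, and the analogous bounds when the second or third variable is perturbed. This identifies $K$ as a genuine $\mathcal B$-valued bilinear Calder\'on--Zygmund kernel.

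With these estimates in hand, the bilinear vector-valued Calder\'on--Zygmund theorem applies: an a priori bound on one triple of exponents (here Proposition \ref{var of app >1}) together with the kernel conditions above yields the endpoint $L^1(\mathbb R)\times L^1(\mathbb R)\to L^{1/2,\infty}(\mathcal B)$. Concretely I would run a Calder\'on--Zygmund decomposition of $f$ and $g$ at the appropriate height, estimate the good--good part by the a priori strong bound, and control the mixed and bad--bad parts off the exceptional sets by the H\"ormander condition, in the same scheme as the proof of Lemma \ref{weak var of bilinear Q}, with the kernel regularity now replacing the explicit averaging identities. Unwinding the meaning of the $L^{1/2,\infty}(\mathcal B)$ bound then gives
$$\lambda\big|\big\{x\in\mathbb R:V_\rho(\Phi(f,g))(x)>\lambda\big\}\big|^2\le C\|f\|_{L^1(\mathbb R)}\|g\|_{L^1(\mathbb R)},$$
which is the claim.

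The main obstacle is precisely the non-separability of $\phi_t$ stressed in the introduction: unlike $Q_t$ and $\mathbb E_j$, the family $\{\phi_t(f,g)\}$ cannot be dominated pointwise by products of maximal functions and one-variable variation operators, so the reduction to the linear L\'epingle/Jones inequalities is unavailable and all of the work must be done at the level of the $\mathcal B$-valued kernel. The delicate point is to control the variation norm in $t$ of the kernel differences, i.e. to show that differentiating in $x$, $y$ or $z$ costs exactly one power of $r^{-1}$ in the $V_\rho$ norm; this is where the smoothness and rapid decay of the Schwartz function $\phi$, combined with the scaling homogeneity, are indispensable.
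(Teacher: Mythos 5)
Your proposal follows essentially the same route as the paper: it verifies that $\{\phi_t(f,g)\}_{t>0}$ is a bilinear $\mathcal B$-valued Calder\'on--Zygmund operator by combining the strong bound of Proposition \ref{var of app >1} with the kernel size and regularity estimates obtained from $\|a\|_{V_\rho}\le\|a\|_{V_1}\le\int_0^\infty|a'(t)|\,dt$ and the Schwartz decay of $\phi$, and then invokes the $L^1\times L^1\to L^{1/2,\infty}(\mathcal B)$ endpoint theorem (Lemma \ref{B weak lemma}, from \cite{JH12}). The argument and the key estimates match the paper's proof.
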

\subsection{Variational inequality with $1<p,p_1,p_2<\infty$.}
The goal of this subsection is to prove Proposition \ref{var of app >1}. Let $\varphi\in \mathscr{S}(\mathbb R)$ and $\int_{\mathbb R}\varphi(x)dx =1$. Then, we have the following pointwise estimate:
\begin{align*}
V_\rho(\Phi(f,g))\le V_\rho(\{\varphi_t(f)\cdot\varphi_t(g)\}_{t>0})+V_\rho(\{\phi_t(f,g)-\varphi_t(f)\cdot\varphi_t(g)\}_{t>0}).
\end{align*}
Hence, it suffices to estimate the $L^p$ norms of $V_\rho(\{\varphi_t(f)\cdot\varphi_t(g)\}_{t>0})$ and $V_\rho(\{\phi_t(f,g)-\varphi_t(f)\cdot\varphi_t(g)\}_{t>0})$.
\begin{lemma}\label{var of bilinear var}
For $\rho>2$, $1<p,p_1,p_2<\infty$ and $\frac1p=\frac1{p_1}+\frac1{p_2}$, we have
\begin{align*}
\|V_\rho(\{\varphi_t(f)\cdot\varphi_t(g)\}_{t>0})\|_{L^p(\mathbb R)}\le C\|f\|_{L^{p_1}(\mathbb R)}\|g\|_{L^{p_2}(\mathbb R)}.
\end{align*}
\end{lemma}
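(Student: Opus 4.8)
The plan is to exploit the multiplicative structure of the family exactly as in the proof of Proposition \ref{var of the Q P}, reducing the bilinear variation to a product of a maximal function and a \emph{linear} variation operator for the approximation of the identity. First I would record the elementary pointwise splitting of the variation of a product. Writing $a_{t_i}b_{t_i}-a_{t_{i+1}}b_{t_{i+1}}=a_{t_i}(b_{t_i}-b_{t_{i+1}})+b_{t_{i+1}}(a_{t_i}-a_{t_{i+1}})$ and applying the triangle inequality in $\ell^\rho$ over any increasing sequence $\{t_i\}$, one obtains
\begin{align*}
\|\{a_tb_t\}\|_{V_\rho}\le \big(\sup_t|a_t|\big)\|\{b_t\}\|_{V_\rho}+\big(\sup_t|b_t|\big)\|\{a_t\}\|_{V_\rho}.
\end{align*}
Specializing to $a_t=\varphi_t(f)(x)$ and $b_t=\varphi_t(g)(x)$ yields the pointwise bound
\begin{align*}
V_\rho(\{\varphi_t(f)\varphi_t(g)\}_{t>0})(x)\le \sup_{t>0}|\varphi_t(f)(x)|\cdot V_\rho(\{\varphi_t(g)\}_{t>0})(x)+\sup_{t>0}|\varphi_t(g)(x)|\cdot V_\rho(\{\varphi_t(f)\}_{t>0})(x).
\end{align*}

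Next I would control the two ingredients on the right separately. Since $\varphi\in\mathscr{S}(\mathbb R)$ possesses a radially decreasing integrable majorant, the smooth maximal function is dominated by the Hardy--Littlewood maximal function, $\sup_{t>0}|\varphi_t(f)(x)|\le CM(f)(x)$, which is bounded on $L^{p_1}(\mathbb R)$ for $1<p_1<\infty$ (and likewise for $g$ on $L^{p_2}$). For the second factor I would invoke the known linear variational inequality for smooth approximations of the identity: for $\rho>2$ and $1<q<\infty$,
\begin{align*}
\|V_\rho(\{\varphi_t(h)\}_{t>0})\|_{L^q(\mathbb R)}\le C\|h\|_{L^q(\mathbb R)},
\end{align*}
established in \cite{JKRW98,CJRW2000} (see also \cite{JSW08}). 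With these two inputs in hand, H\"older's inequality with exponents $p_1,p_2$ and $\frac1p=\frac1{p_1}+\frac1{p_2}$ closes the estimate:
\begin{align*}
\|V_\rho(\{\varphi_t(f)\varphi_t(g)\}_{t>0})\|_{L^p}\le \|M(f)\|_{L^{p_1}}\|V_\rho(\{\varphi_t(g)\})\|_{L^{p_2}}+\|M(g)\|_{L^{p_2}}\|V_\rho(\{\varphi_t(f)\})\|_{L^{p_1}}\le C\|f\|_{L^{p_1}}\|g\|_{L^{p_2}}.
\end{align*}

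I expect the genuine content to sit entirely in the linear $\rho$-variation bound for $\{\varphi_t(h)\}$; the product splitting and the maximal/H\"older steps are routine and mirror Proposition \ref{var of the Q P} verbatim. The only point demanding care is to verify that the hypotheses on $\varphi$ (Schwartz with $\int\varphi=1$) are precisely those under which the cited linear variation inequality holds for the full range $1<q<\infty$. Once that is confirmed there is no further obstacle, and no endpoint or interpolation argument is needed at this stage because both $p_1$ and $p_2$ already range over $(1,\infty)$; the delicate endpoint analysis is deferred to Proposition \ref{app var weak}.
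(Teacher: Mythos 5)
Your proof is correct and follows essentially the same route as the paper: the same product-splitting of the variation, domination of $\sup_t|\varphi_t(f)|$ by the Hardy--Littlewood maximal function, the linear $\rho$-variation inequality for $\{\varphi_t(h)\}_{t>0}$, and H\"older's inequality. The only difference is the reference for the linear variation bound (the paper cites Theorem 2.6 of \cite{HL17}, which covers convolution with a general Schwartz approximation of the identity, whereas \cite{JKRW98,CJRW2000} treat averages and the Hilbert transform; \cite{JSW08} is the appropriate classical source for the general case), but this does not affect the argument.
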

\begin{proof}
Note that
\begin{align*}
&|\varphi_{t_i}(f)\cdot\varphi_{t_i}(g)-\varphi_{t_{i+1}}(f)\cdot\varphi_{t_{i+1}}(g)|\\
\le& |\varphi_{t_i}(f)-\varphi_{t_{i+1}}(f)|\cdot|\varphi_{t_i}(g)|+|\varphi_{t_i}(g)-\varphi_{t_{i+1}}(g)|\cdot |\varphi_{t_{i+1}}(f)|.
\end{align*}
Then, by using H\"{o}lder's inequality and Theorem 2.6 in \cite{HL17}, we get
\begin{align*}
\|V_\rho(\{\varphi_t(f)\cdot\varphi_t(g)\}_{t>0})\|_{L^p(\mathbb R)}&\le \|M(g)\cdot V_\rho(\{\varphi_t(f)\}_{t>0})\|_{L^p}+\|M(f)\cdot V_\rho(\{\varphi_t(g)\}_{t>0})\|_{L^p}\\
&\le C\|f\|_{L^{p_1}(\mathbb R)}\|g\|_{L^{p_2}(\mathbb R)}.
\end{align*}
\end{proof}
The long variation operator $V^L_\rho(\mathcal F)$ of the family $\mathcal F$ at $x$ is defined by
\begin{equation}\label{defini of Vq(F)}
V^L_\rho(\mathcal F)(x)=\|\{F_{2^n}(x)\}_{n\in\mathbb{Z}}\|_{V_\rho},\quad \rho\ge1.
\end{equation}
Moreover, the short variation operator
 $$
S_2(\mathcal F)(x)=\bigg(\sum_{j\in\mathbb Z}\|\{F_t(x)\}_{t>0}\|_{V_2[2^j,2^{j+1}]}^2\bigg)^{1/2}.
 $$

Then the following pointwise comparison holds.
 \begin{lemma}\ {\rm (see \cite[Lemma 1.3]{JSW08})}\label{lem:convert lemma}
\begin{equation}\label{pcls}
V_\rho(\mathcal F)(x)\lesssim V^L_\rho(\mathcal F)(x)+S_2(\mathcal F)(x).
\end{equation}
 \end{lemma}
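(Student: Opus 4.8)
The plan is to fix $x\in\rr$, bound $\|\{F_t(x)\}_{t>0}\|_{V_\rho}$ along an arbitrary increasing sequence $\{t_i\}$, and then take the supremum. For each $i$ let $n_i\in\zz$ be the unique integer with $t_i\in[2^{n_i},2^{n_i+1})$; since $\{t_i\}$ is increasing, the sequence $\{n_i\}$ is nondecreasing. I would split the consecutive pairs into the \emph{same-block} set $A=\{i:n_i=n_{i+1}\}$ and the \emph{cross-block} set $B=\{i:n_i<n_{i+1}\}$, and estimate $\big(\sum_{i\in A}|F_{t_i}(x)-F_{t_{i+1}}(x)|^\rho\big)^{1/\rho}$ and $\big(\sum_{i\in B}|F_{t_i}(x)-F_{t_{i+1}}(x)|^\rho\big)^{1/\rho}$ separately.

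For the same-block sum I would group by the common block index $n$. Within a fixed block the consecutive differences are controlled by $\|\{F_t(x)\}_t\|_{V_2[2^n,2^{n+1}]}$, and since $\rho>2$ the embedding $\ell^2\hookrightarrow\ell^\rho$ gives $\sum_{i\in A,\,n_i=n}|F_{t_i}(x)-F_{t_{i+1}}(x)|^\rho\le\|\{F_t(x)\}_t\|_{V_2[2^n,2^{n+1}]}^\rho$. Summing over $n$ and applying $\ell^2\hookrightarrow\ell^\rho$ once more yields $\big(\sum_{i\in A}|F_{t_i}(x)-F_{t_{i+1}}(x)|^\rho\big)^{1/\rho}\le S_2(\mathcal F)(x)$.

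For the cross-block sum I would insert the dyadic endpoints of the blocks, writing for each $i\in B$
\[
F_{t_i}(x)-F_{t_{i+1}}(x)=\big(F_{t_i}(x)-F_{2^{n_i+1}}(x)\big)+\big(F_{2^{n_i+1}}(x)-F_{2^{n_{i+1}}}(x)\big)+\big(F_{2^{n_{i+1}}}(x)-F_{t_{i+1}}(x)\big),
\]
and then applying the triangle inequality in $\ell^\rho$. The middle differences run over the increasing dyadic sequence $\cdots\le 2^{n_i+1}\le 2^{n_{i+1}}\le\cdots$ (monotonicity of $\{n_i\}$ guarantees admissibility), so their $\rho$-sum is $\le V^L_\rho(\mathcal F)(x)$. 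Each boundary difference $F_{t_i}(x)-F_{2^{n_i+1}}(x)$ lives in the single block $[2^{n_i},2^{n_i+1}]$; because $\{n_i\}$ is nondecreasing, a given index $n$ can occur as $n_i$ with $i\in B$ at most once, so $\sum_{i\in B}|F_{t_i}(x)-F_{2^{n_i+1}}(x)|^\rho\le\sum_n\|\{F_t(x)\}_t\|_{V_2[2^n,2^{n+1}]}^\rho\le S_2(\mathcal F)(x)^\rho$, and the same holds for the other boundary term. Combining the three pieces gives $\big(\sum_{i\in B}|F_{t_i}(x)-F_{t_{i+1}}(x)|^\rho\big)^{1/\rho}\lesssim V^L_\rho(\mathcal F)(x)+S_2(\mathcal F)(x)$. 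Adding the two sums and taking the supremum over all increasing $\{t_i\}$ proves \eqref{pcls}.

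The main obstacle is the bookkeeping in the cross-block estimate: one must verify that each dyadic block is charged by only boundedly many boundary terms and that the inserted dyadic endpoints $\{2^{n_i+1},2^{n_{i+1}}\}$ assemble into a legitimate increasing sequence admissible for $V^L_\rho$. The two monotone-embedding steps are exactly where $\rho>2$ (equivalently, the choice of a $V_2$-based short variation $S_2$) enters; for $\rho<2$ the inequality in this precise form would break down.
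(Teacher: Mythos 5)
Your argument is correct: the block decomposition, the two $\ell^2\hookrightarrow\ell^\rho$ embeddings for the same-block and boundary terms, and the insertion of dyadic endpoints for the cross-block term all check out (in particular your observation that each block index occurs at most once among the cross-block boundary terms is the key bookkeeping point, and it is right). The paper itself gives no proof of this lemma -- it simply cites \cite[Lemma 1.3]{JSW08} -- and your proof is essentially the standard argument from that reference, so there is nothing to reconcile.
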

\begin{lemma}\label{var of App diff}
For $\rho>2$,$1<p,p_1,p_2<\infty$ and $\frac1p=\frac1{p_1}+\frac1{p_2}$, we have
\begin{align*}
\|V_\rho(\{\phi_t(f,g)-\varphi_t(f)\cdot\varphi_t(g)\}_{t>0})\|_{L^p(\mathbb R)}\le C\|f\|_{L^{p_1}(\mathbb R)}\|g\|_{L^{p_2}(\mathbb R)}.
\end{align*}
\end{lemma}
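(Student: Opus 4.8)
The plan is to exploit the cancellation created by subtracting the tensor approximation. Set $\Psi(u,v)=\phi(u,v)-\varphi(u)\varphi(v)$, which lies in $\mathscr{S}(\mathbb R^2)$ and, since $\int_{\mathbb R^2}\phi(x)dx=1$ and $\int_{\mathbb R}\varphi(x)dx=1$, satisfies the mean-zero condition $\int_{\mathbb R^2}\Psi(u,v)\,du\,dv=0$. Writing $\Psi_t(u,v)=t^{-2}\Psi(u/t,v/t)$ and $B_t(f,g)(x)=\int_{\mathbb R^2}\Psi_t(x-y,x-z)f(y)g(z)\,dy\,dz$, the family in question is $\mathcal F=\{B_t(f,g)\}_{t>0}=\{\phi_t(f,g)-\varphi_t(f)\cdot\varphi_t(g)\}_{t>0}$. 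First I would apply Lemma \ref{lem:convert lemma} to split the $\rho$-variation into its long and short parts, $V_\rho(\mathcal F)\lesssim V^L_\rho(\mathcal F)+S_2(\mathcal F)$, and bound the two $L^p$ norms separately.

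For the long variation, the mean-zero cancellation forces $B_{2^n}(f,g)\to 0$ as $n\to\pm\infty$. Using $|a-b|^\rho\le 2^{\rho-1}(|a|^\rho+|b|^\rho)$ together with the fact that each index occurs in at most two consecutive differences of an increasing subsequence, one obtains the elementary pointwise bound
\[
V^L_\rho(\mathcal F)(x)\le 2\Big(\sum_{n\in\mathbb Z}|B_{2^n}(f,g)(x)|^\rho\Big)^{1/\rho}\le 2\Big(\sum_{n\in\mathbb Z}|B_{2^n}(f,g)(x)|^2\Big)^{1/2},
\]
the last step using $\ell^2\hookrightarrow\ell^\rho$ for $\rho\ge2$. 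This reduces the long-variation estimate to the $L^{p_1}\times L^{p_2}\to L^p$ boundedness of the dyadic bilinear square function $G(f,g)=(\sum_{n}|B_{2^n}(f,g)|^2)^{1/2}$.

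For the short variation, within each block $[2^j,2^{j+1}]$ I would control the $V_2$-norm by a Sobolev / fundamental-theorem-of-calculus estimate, $\|\{B_t(f,g)(x)\}\|_{V_2[2^j,2^{j+1}]}\lesssim(\int_{2^j}^{2^{j+1}}|t\partial_t B_t(f,g)(x)|^2\,\tfrac{dt}{t})^{1/2}$, so that summing over $j$ gives $S_2(\mathcal F)(x)\lesssim g(f,g)(x):=(\int_0^\infty|t\partial_t B_t(f,g)(x)|^2\,\tfrac{dt}{t})^{1/2}$, the continuous bilinear $g$-function. The kernel $t\partial_t\Psi_t$ generating this $g$-function is again Schwartz with vanishing integral, hence of exactly the same type as the one underlying $G$.

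Both $G$ and $g$ are bilinear operators taking values in a Hilbert space ($\ell^2(\mathbb Z)$, respectively $L^2((0,\infty),dt/t)$), with Schwartz, mean-zero, vector-valued kernels. I would then verify the hypotheses of the bilinear vector-valued Calder\'on-Zygmund theory: an initial $L^2$-type bilinear estimate coming from Plancherel and the frequency localization and decay supplied by $\widehat\Psi(0,0)=0$, together with the vector-valued size and H\"older-smoothness (H\"ormander) bounds on the kernels. The theory then yields boundedness for all $1<p_1,p_2<\infty$ with $\frac1p=\frac1{p_1}+\frac1{p_2}$, in particular across the range $p<1$, completing the proof. The main obstacle is precisely that $\phi$, and hence $\Psi$, is not multiplicatively separable, so these square functions cannot be reduced to products of linear Littlewood-Paley operators; one must instead run the full bilinear vector-valued Calder\'on-Zygmund machinery and push it below $p=1$, which is where establishing the initial $L^2$-type bound and checking the vector-valued kernel regularity does the real work.
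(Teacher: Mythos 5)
Your proposal is correct and follows essentially the same route as the paper: split $V_\rho$ into long and short variations via Lemma \ref{lem:convert lemma}, dominate the long part by the dyadic bilinear square function $\big(\sum_n|\psi_{2^n}(f,g)|^2\big)^{1/2}$ using the mean-zero Schwartz kernel $\psi=\phi-\varphi\otimes\varphi$, and reduce the short part to continuous bilinear $g$-functions whose $L^{p_1}\times L^{p_2}\to L^p$ bounds come from the bilinear Littlewood--Paley/vector-valued Calder\'on--Zygmund theory (the paper simply cites \cite{JH12}, \cite{SXY17}, \cite{XPY15} rather than re-verifying the hypotheses). The only cosmetic differences are that the paper controls the short variation by a geometric mean of two $g$-functions via the Bergh--Peetre inequality where you use a fundamental-theorem-of-calculus bound yielding a single $g$-function, and your remark about pushing below $p=1$ is unnecessary here since this lemma only asserts the range $1<p<\infty$ (the subunital exponents are recovered later by interpolation with the weak endpoint estimate).
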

\begin{proof}
To estimate the $L^p$ norm of $V_\rho(\{\phi_t(f,g)-\varphi_t(f)\cdot\varphi_t(g)\}_{t>0})$,we denote the function $\phi(y,z)-\varphi(y)\varphi(z)$ by $\psi(y,z)$ for convenience. \eqref{pcls} reduces above desired estimate to
\begin{align}\label{var of App diff L}
\|V^L_\rho(\{\psi_t(f,g)\}_{t>0})\|_{L^p(\mathbb R)}\le C\|f\|_{L^{p_1}(\mathbb R)}\|g\|_{L^{p_2}(\mathbb R)}
\end{align}
and
\begin{align}\label{var of App diff S}
\|S_2(\{\psi_t(f,g)\}_{t>0})\|_{L^p(\mathbb R)}\le C\|f\|_{L^{p_1}(\mathbb R)}\|g\|_{L^{p_2}(\mathbb R)}.
\end{align}
\par
We show \eqref{var of App diff L} first. Clearly, for $\rho>2$ we have
\begin{align*}
V^L_\rho(\{\phi_t(f,g)-\varphi_t(f)\cdot\varphi_t(g)\}_{t>0})=V^L_\rho(\{\psi_t(f,g)\}_{t>0})\le \big(\sum_j|\psi_{2^j}(f,g)|^2\big)^{1/2}.
\end{align*}
Hence, it suffices to prove
\begin{align}\label{squ fun est}
\|\big(\sum_j|\psi_{2^j}(f,g)|^2\big)^{1/2}\|_{L^p(\mathbb R)}\le C\|f\|_{L^{p_1}(\mathbb R)}\|g\|_{L^{p_2}(\mathbb R)},
\end{align}
for $1<p,p_1,p_2<\infty$ and $\frac1p=\frac1{p_1}+\frac1{p_2}$.
\par
To obtain \eqref{squ fun est}, we apply \cite[Theorem 1.1]{JH12} and verify $\psi$ satisfying related conditions. Note that $\phi\in\mathscr S(\mathbb R^2)$ and $\varphi\in\mathscr S(\mathbb R)$, then $\psi\in\mathscr S(\mathbb R^2)$. Hence, for any $N\in\mathbb N$ and multi-indices $\alpha$ we have
\begin{align*}
|\partial^\alpha\psi(y,z)|\le \frac{C_N}{(1+|y|+|z|)^{2N}}\le \frac{C_N}{(1+|y|)^N(1+|z|)^N}.
\end{align*}
Moreover, it satisfies the cancellation condition
\begin{align*}
\int_{\mathbb R^2}\psi(y,z)dydz=\int_{\mathbb R^2}\phi(y,z)dydz-\int_{\mathbb R}\varphi(y)dy\cdot\int_{\mathbb R}\varphi(z)dz=0.
\end{align*}
\par
As a result, we obtain
\begin{align*}
\|V^L_\rho(\{\phi_t(f,g)-\varphi_t(f)\cdot\varphi_t(g)\}_{t>0})\|_{L^p(\mathbb R)}&\le \|\big(\sum_j|\psi_{2^j}(f,g)|^2\big)^{1/2}\|_{L^p(\mathbb R)}\\
&\le C\|f\|_{L^{p_1}(\mathbb R)}\|g\|_{L^{p_2}(\mathbb R)},
\end{align*}
and complete the proof of \eqref{var of App diff L}.
\par
Next we turn to proof of \eqref{var of App diff S}. By using Bergh and Peetre's \cite{BP} estimate
\begin{align*}
\|a\|_{V_\rho}\le \|a\|_{L^\rho}^{1/\rho'}\|a'\|_{L^\rho}^{1/\rho},
\end{align*}
we observe that
\begin{align*}
S_2^2(\{\psi_t(f,g)\}_{t>0})(x)&=\sum_k\|\{\psi_t(f,g)\}_{t>0}\|^2_{V_2[2^k,2^{k+1}]}\\
&\le \sum_k\|\psi_t(f,g)\|_{L_t^2[2^k,2^{k+1}]}\|\frac d{dt}\psi_t(f,g)\|_{L_t^2[2^k,2^{k+1}]}\\
&\le C\bigg(\int_0^\infty|\psi_t(f,g)(x)|^2\frac{dt}t\bigg)^{1/2}\bigg(\int_0^\infty|\tilde{\psi}_t(f,g)(x)|^2\frac{dt}t\bigg)^{1/2}\\
&:=C G(f,g)(x)\tilde{G}(f,g)(x),
\end{align*}
where $\tilde{\psi}(y,z)=2\psi(y,z)+y\partial_y\psi(y,z)+z\partial_z\psi(y,z)$. Note that $\psi,\tilde{\psi}\in \mathscr S(\mathbb R^2)$, for any $N\in\mathbb N$ we have
\begin{align*}
|\hat{\psi}(\xi,\eta)|+|\hat{\tilde\psi}(\xi,\eta)|\le \frac C{(1+|(\xi,\eta)|)^N}\ \ \text{and}\ \ \hat{\psi}(0,0)=\hat{\tilde\psi}(0,0)=0.
\end{align*}
Using \cite[Example 2.1]{SXY17} and \cite[Theorem 1.2]{XPY15}, we get
\begin{align*}
\|G(f,g)\|_{L^p(\mathbb R)}+\|\tilde G(f,g)\|_{L^p(\mathbb R)}\le C\|f\|_{L^{p_1}(\mathbb R)}\|g\|_{L^{p_2}(\mathbb R)}
\end{align*}
for $1<p,p_1,p_2<\infty$. Furthermore, by  H\"{o}lder's inequality
\begin{align*}
\|S_2(\{\psi_t(f,g)\}_{t>0})\|^p_{L^p(\mathbb R)}&=\int_{\mathbb R}S_2(\{\psi_t(f,g)\}_{t>0})^{2\cdot \frac p2}(x)dx\le \int_{\mathbb R}G(f,g)^{\frac p2}(x)\tilde{G}(f,g)^{\frac p2}(x)dx\\
&\le C\|G(f,g)\|^{\frac p2}_{L^p(\mathbb R)}\|\tilde G(f,g)\|^{\frac p2}_{L^p(\mathbb R)}\le C\|f\|^p_{L^{p_1}(\mathbb R)}\|g\|^p_{L^{p_2}(\mathbb R)}.
\end{align*}
This completes the proof of \eqref{var of App diff S}.
\end{proof}
\subsection{Variational weak endpoint type estimate}
To prove Proposition \ref{app var weak}, we use bilinear vector-valued Calder\'{o}n-Zygmund theory. Let $\mathcal B$ be the Banach space given by \eqref{def of B} and $F$ be a bilinear function defined on $\mathbb C\times \mathbb C$ to $\mathcal B$, we define
 $$
 \|F\|_{\mathcal{BL}(\mathbb C\times \mathbb C\rightarrow \mathcal B)}=\sup_{|\xi_1|,|\xi_2|\le1}\|F(\xi_1,\xi_2)\|_{\mathcal B}.
 $$
 \par
 Let $T$  be a bilinear operator defined on $\mathscr S(\mathbb R)\times\mathscr S(\mathbb R)$ and taking values in $\mathscr S'(\mathbb R;\mathcal B)$. Assume that the restriction of its distributional kernel away from the diagonal $x=y=z$ in $\mathbb R^3$ coincides with a $\mathcal B$-valued function $K$, satisfying the size condition
 \begin{align*}
 \|K(x,y,z)\|_{\mathcal{BL}(\mathbb C\times \mathbb C\rightarrow \mathcal B)}\le \frac C{(|x-y|+|x-z|)^2}\ \ \text{for}\ \ |x-y|+|x-z|\neq0,
 \end{align*}
 the regularity conditions
  \begin{align*}
 &\|K(x,y,z)-K(x+h,y,z)\|_{\mathcal{BL}(\mathbb C\times \mathbb C\rightarrow \mathcal B)}+\|K(x,y,z)-K(x,y+h,z)\|_{\mathcal{BL}(\mathbb C\times \mathbb C\rightarrow \mathcal B)}\\
 &+\|K(x,y,z)-K(x,y,z+h)\|_{\mathcal{BL}(\mathbb C\times \mathbb C\rightarrow \mathcal B)}\le \frac {C|h|}{(|x-y|+|x-z|)^3}
 \end{align*}
for $|h|\le\max(|x-y|,|x-z|)/2$, and such that
\begin{align*}
T(f,g)(x)=\int_{\mathbb R^2}K(x,y,z)f(y)g(z)dxdy
\end{align*}
whenever $f,g\in\mathcal D(\mathbb R)$ and $x\notin \text{supp}\ f\cap \text{supp}\ g$. Under above assumptions and $T$ is bounded $L^{p_1}\times L^{p_2}\rightarrow L^p(\mathcal B)$ for some $1<p,p_1,p_2<\infty$ with $\frac1p=\frac1{p_1}+\frac1{p_2}$, we will say $T$ is a bilinear $\mathcal B$-valued  Calder\'{o}n-Zygmund operator.
We state a weak endpoint result in \cite{JH12} for bilinear vector-valued Calder\'{o}n-Zygmund operators as follows.
\begin{lemma}\label{B weak lemma}
If $T$ is a bilinear $\mathcal B$-valued  Calder\'{o}n-Zygmund operator, then $T$ is bounded $L^1\times L^1\rightarrow L^{1/2,\infty}(\mathcal B)$.
\end{lemma}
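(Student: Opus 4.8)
The plan is to run the Calder\'{o}n--Zygmund machine for bilinear singular integrals with the target space $\mathcal B$ carried through every estimate, in exact parallel with the proof of Lemma \ref{weak var of bilinear Q}; the only difference is that each absolute value is replaced by $\|\cdot\|_{\mathcal B}$ and every pointwise kernel estimate is read in the $\mathcal{BL}(\mathbb C\times\mathbb C\to\mathcal B)$ sense. Exactly as in Lemma \ref{weak var of bilinear Q}, scaling lets me assume $\lambda=1$ and $\|f\|_{L^1}=\|g\|_{L^1}=1$, so that it suffices to prove
\begin{align*}
\bigl|\{x\in\mathbb R:\|T(f,g)(x)\|_{\mathcal B}>1\}\bigr|\le C.
\end{align*}

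Next I apply the Calder\'{o}n--Zygmund decomposition to $f$ and to $g$ at height $1$, obtaining $f=g_1+b_1$, $g=g_2+b_2$ with $b_i=\sum_k b_{i,k}$, where the $b_{i,k}$ are supported on disjoint dyadic intervals $I_{i,k}$, have mean zero, satisfy $\|b_{i,k}\|_{L^1}\lesssim|I_{i,k}|$ and $\sum_k|I_{i,k}|\lesssim1$, while $\|g_i\|_{L^\infty}\lesssim1$ and $\|g_i\|_{L^1}\le1$. Setting $\Omega_i=\bigcup_k\tilde I_{i,k}$ we have $|\Omega_1|+|\Omega_2|\lesssim1$, so these sets are harmless, and by bilinearity
\begin{align*}
T(f,g)=T(g_1,g_2)+T(b_1,g_2)+T(g_1,b_2)+T(b_1,b_2).
\end{align*}
I then bound the level set at height $1$ by $|\Omega_1|+|\Omega_2|$ together with the level sets of the four pieces at height $1/4$, each outside the relevant exceptional set. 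For the good--good piece, since $g_i\in L^\infty\cap L^1$ gives $\|g_i\|_{L^{p_i}}\lesssim1$, the assumed bound $L^{p_1}\times L^{p_2}\to L^p(\mathcal B)$ yields $\|T(g_1,g_2)\|_{L^p(\mathcal B)}\lesssim1$, and Chebyshev's inequality controls its level set by $C$.

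For the mixed piece $T(b_1,g_2)$ restricted to $x\notin\Omega_1$ I use the mean-zero property of each $b_{1,k}$ to replace $K(x,y,z)$ by $K(x,y,z)-K(x,c_{1,k},z)$, where $c_{1,k}$ is the center of $I_{1,k}$, and then invoke the regularity of $K$ in the second variable; combined with $\|g_2\|_{L^\infty}\lesssim1$ and the convergent integral $\int_{\mathbb R}(|x-c_{1,k}|+|x-z|)^{-3}\,dz\lesssim|x-c_{1,k}|^{-2}$, this gives the pointwise bound $\|T(b_{1,k},g_2)(x)\|_{\mathcal B}\lesssim|I_{1,k}|\,\|b_{1,k}\|_{L^1}\,|x-c_{1,k}|^{-2}$. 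Integrating over $(\tilde I_{1,k})^c$ produces $\lesssim\|b_{1,k}\|_{L^1}$, and summing in $k$ with $\sum_k\|b_{1,k}\|_{L^1}\lesssim1$ followed by Chebyshev closes this term; the piece $T(g_1,b_2)$ is symmetric.

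The main obstacle is the bad--bad piece $T(b_1,b_2)$ on $(\Omega_1\cup\Omega_2)^c$. As in Lemma \ref{weak var of bilinear Q} I first split the product according to which interval is smaller, writing $b_1(y)b_2(z)=\sum_k b_{1,k}(y)b_2^{(k)}(z)+\sum_i b_1^{(i)}(y)b_{2,i}(z)$ with $b_2^{(k)}=\sum_{i:|I_{2,i}|\le|I_{1,k}|}b_{2,i}$, and treat the first sum, the second being symmetric. Using the cancellation of $b_{1,k}$ against the second-variable regularity of $K$ as above,
\begin{align*}
\|T(b_{1,k},b_2^{(k)})(x)\|_{\mathcal B}&\lesssim|I_{1,k}|\int_{I_{1,k}}|b_{1,k}(y)|\int_{\mathbb R}\frac{|b_2^{(k)}(z)|}{(|x-c_{1,k}|+|x-z|)^{3}}\,dz\,dy\\
&\lesssim|I_{1,k}|\,\|b_{1,k}\|_{L^1}\,|x-c_{1,k}|^{-2}.
\end{align*}
The delicate step is the inner $z$-integral: because $b_2^{(k)}$ is spread over many intervals its $L^1$ norm is not small, so a crude bound by $\|b_2^{(k)}\|_{L^1}|x-c_{1,k}|^{-3}$ fails to be summable; instead I use that the $I_{2,i}$ have disjoint interiors, so that $\int_{B}|b_2^{(k)}|\lesssim|B|$ for every ball $B$, and summing over the dyadic annuli centered at $x$ against the kernel weight produces exactly the gain $|x-c_{1,k}|^{-2}$. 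Integrating the resulting pointwise bound over $(\tilde I_{1,k})^c$ again yields $\lesssim\|b_{1,k}\|_{L^1}$, and summing in $k$ with Chebyshev finishes the bad--bad term and hence the proof.
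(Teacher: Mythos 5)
You should first be aware that the paper does not prove Lemma \ref{B weak lemma} at all: it is quoted from Hart \cite{JH12} (``We state a weak endpoint result in \cite{JH12} \dots''), and the proof environment that follows it in the source is really the proof of Proposition \ref{app var weak}, which only verifies that the kernel $\{\phi_t(y,z)\}_{t>0}$ satisfies the size and regularity hypotheses. Your proposal therefore supplies an argument the paper omits. What you wrote is the standard bilinear Calder\'on--Zygmund weak-type $(1,1,\tfrac12)$ scheme carried out with $|\cdot|$ replaced by $\|\cdot\|_{\mathcal B}$ throughout, and it is sound: the good--good term uses the assumed $L^{p_1}\times L^{p_2}\to L^p(\mathcal B)$ bound together with $\|g_i\|_{L^{p_i}}\lesssim 1$; the mixed and bad--bad terms use the cancellation of $b_{1,k}$ against the regularity of $K$ in its second variable, which is legitimate since $|y-c_{1,k}|\le |I_{1,k}|\le \max(|x-y|,|x-z|)/2$ for $y\in I_{1,k}$ and $x\notin\tilde I_{1,k}$, and the kernel representation applies because $x\notin\mathrm{supp}\,b_{1,k}\cap\mathrm{supp}\,g_2$. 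Unlike the paper's Lemma \ref{weak var of bilinear Q}, you never exploit the product structure of the kernel, only the triangle inequality in $\mathcal B$; your route is thus genuinely more general, and Lemma \ref{weak var of bilinear Q} could itself be recovered from it after checking the kernel hypotheses.

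Two points should be tightened. First, the claim that $\int_B|b_2^{(k)}|\lesssim|B|$ for \emph{every} ball $B$ is false as stated: $b_{2,i}$ may concentrate on a tiny subset of $I_{2,i}$, so a small ball inside $I_{2,i}$ can carry mass comparable to $\|b_{2,i}\|_{L^1}$ rather than to its own length. What is true, and what you actually use, is that $\int_{B(x,r)}|b_2^{(k)}|\lesssim r$ whenever $r$ dominates the lengths of all intervals occurring in $b_2^{(k)}$, since those intervals are then contained in $B(x,3r)$ and have disjoint interiors. Every region in your dyadic decomposition has radius at least $|x-c_{1,k}|\ge|I_{1,k}|\ge|I_{2,i}|$ for the relevant $i$, so the estimate survives, but you should state the restricted version. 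Second (cosmetic, and inherited from the paper's own Lemma \ref{weak var of bilinear Q}), the splitting $b_1b_2=\sum_kb_{1,k}b_2^{(k)}+\sum_ib_1^{(i)}b_{2,i}$ double counts the pairs with $|I_{1,k}|=|I_{2,i}|$; make one of the two inequalities strict so the decomposition is exact.
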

\begin{proof} Lemma \ref{B weak lemma} implies that it suffices to verify $\{\phi_t(f,g)\}_{t>0}$ be a bilinear $\mathcal B$-valued Calder\'{o}-Zygmund operator. We have proved that $\{\phi_t(f,g)\}_{t>0}$ is bounded $L^{p_1}\times L^{p_2}\rightarrow L^p(\mathcal B)$ for $1<p,p_1,p_2<\infty$ in Proposition \ref{var of app >1}, it suffices to verify the kernel $\{\phi_t(y,z)\}_{t>0}$ satisfying related size condition and regularity conditions.
\par
 We consider the size condition first. Note that $\|a\|_{\mathcal B}=\|a\|_{V_\rho}\le \|a\|_{V_1}\le \int_0^\infty |a'(t)|dt$. Then,
\begin{align*}
\|\{\phi_t(y,z)\}_{t>0}\|_{\mathcal B}&\le \int_0^\infty\big|\frac{d}{dt}\phi_t(y,z)\big|dt\le C\int_0^\infty\big[\frac1{t^3}|\phi(\frac yt,\frac zt)|+\frac1{t^4}\big(|y||\phi_1(\frac yt,\frac zt)|+|z||\phi_2(\frac yt,\frac zt)|\big)\big]dt\\
&\le C\int_0^\infty\big[\frac1{t^3(1+\frac{|(y,z)|}t)^N}+\frac{|(y,z)|}{t^4(1+\frac{|(y,z)|}t)^N}\big]dt\le \frac{C}{|(y,z)|^2}\le \frac{C}{(|y|+|z|)^2},
\end{align*}
where $\phi_1(y,z)=\partial_y\phi(y,z)$ and $\phi_2(y,z)=\partial_z\phi(y,z)$.
\par
For the regularity condition, we have
\begin{align*}
\|\{\phi_t(y,z)-\phi_t(y',z)\}_{t>0}\|_{\mathcal B}\le C\int_0^\infty\frac{|y-y'|}{t^4(1+\frac{|(y,z)|}t)^N}dt\le \frac{C|y-y'|}{|(y,z)|^3}\le \frac{C|y-y'|}{(|y|+|z|)^3}.
\end{align*}
In the same way, we have
\begin{align*}
\|\{\phi_t(y,z)-\phi_t(y,z')\}_{t>0}\|_{\mathcal B}\le C\int_0^\infty\frac{|z-z'|}{t^4(1+\frac{|(y,z)|}t)^N}dt\le \frac{C|z-z'|}{|(y,z)|^3}\le \frac{C|z-z'|}{(|y|+|z|)^3}.
\end{align*}
This completes the proof of Proposition \ref{app var weak}.
\end{proof}

\bibliographystyle{amsplain}

\end{document}